\newtheorem{theorem}{Theorem}[section]
\newtheorem{lemma}[theorem]{Lemma}
\newtheorem{proposition}[theorem]{Proposition}
\newtheorem{corollary}[theorem]{Corollary}
\newtheorem*{claim*}{Claim}
\theoremstyle{definition}
\newtheorem*{remark*}{Remark}
\numberwithin{equation}{section}
\theoremstyle{remark}
\newtheorem*{note*}{Note}
\newcommand{\ls}{\leqslant}
\newcommand{\gr}{\geqslant}
\begin{document}
\small

\title{\bf Further unifying two approaches to the hyperplane conjecture}

\author{Beatrice\,-Helen Vritsiou}

\date{}

\maketitle

\begin{abstract}
\footnotesize We compare and combine two approaches that 
have been recently introduced by Dafnis and Paouris \cite{DP} and by Klartag and Milman
\cite{KM} with the aim of providing bounds for the isotropic
constants of convex bodies. By defining a new hereditary parameter
for all isotropic log-concave measures, we are able to show that the method in \cite{KM},
and the apparently stronger conclusions it leads to, can be extended in the full range 
of the ``weaker'' assumptions of \cite{DP}. The new parameter we define is related to 
the highest dimension $k\ls n-1$ in which one can always find marginals of an $n$-dimensional 
isotropic measure which have bounded isotropic constant.
\end{abstract}

\section{Introduction}

The purpose of this note is to compare two recent approaches to the hyperplane conjecture that 
have been introduced by Dafnis and Paouris in \cite{DP} and by Klartag and Milman in \cite{KM};
these are based on two fruitful techniques initially developed by Paouris (see \cite{Pa1}, \cite{Pa2}) 
and by Klartag (see \cite{Kl1}), namely the study of the $L_q$-centroid bodies and the use of
the logarithmic Laplace transform of a measure. In \cite{KM}, Klartag and Milman were the first
to observe that a combination of aspects of the two techniques can lead to better bounds
for the isotropic constant problem in many interesting cases. Here we propose further combining their
method with the approach in \cite{DP}; this enables us to extend the range in which the former 
could be applied, and also to slightly improve the bounds that the latter can give us. 
The gluing ingredient in this paper is a variant of the main parameter in \cite{DP}, and is related 
to the highest dimension $k\ls n-1$ in which we can find marginals of an $n$-dimensional isotropic 
measure which have bounded isotropic constant. Our results show some type of equivalence between the 
two approaches in question, and the bounds that they can provide for the isotropic constant 
problem, which might be improved through the study of the new parameter.

Let us now turn to the details. The hyperplane conjecture is one of the most well-known
problems in Asymptotic Geometric Analysis. It asks whether the isotropic constant of every
logarithmically-concave measure can be bounded by a quantity independent of the dimension of
the measure. The notion of the isotropic constant, originally defined for convex bodies (see \cite{Bou1}), 
has been generalised in the setting of log-concave measures as follows: if $\mu$ is a log-concave
measure on ${\mathbb R}^n$ with density $f_{\mu }$ with respect to the Lebesgue measure, we set
\begin{equation*}\|\mu\|_{\infty}:=\sup_{x\in {\mathbb R}^n} f_{\mu} (x)\end{equation*}
and we define the isotropic constant of $\mu $ by
\begin{equation}\label{definition-isotropic}
L_{\mu }:=\left (\frac{\|\mu\|_{\infty }}{\int_{{\mathbb
R}^n}f_{\mu}(x)dx}\right )^{\frac{1}{n}} [\det {\rm
Cov}(\mu)]^{\frac{1}{2n}},\end{equation} 
where ${\rm Cov}(\mu)$ is the covariance matrix of $\mu$ with entries
\begin{equation*}{\rm Cov}(\mu )_{ij}:=\frac{\int_{{\mathbb R}^n}x_ix_j f_{\mu}
(x)\,dx}{\int_{{\mathbb R}^n} f_{\mu} (x)\,dx}-\frac{\int_{{\mathbb
R}^n}x_i f_{\mu} (x)\,dx}{\int_{{\mathbb R}^n} f_{\mu}
(x)\,dx}\frac{\int_{{\mathbb R}^n}x_j f_{\mu}
(x)\,dx}{\int_{{\mathbb R}^n} f_{\mu} (x)\,dx}.\end{equation*} 
We say that a log-concave measure $\mu$ on ${\mathbb R}^n$ is isotropic 
(and we write $\mu\in {\cal IL}_{[n]}$) if $\mu$ is a centered probability measure, 
i.e. a probability measure with barycentre at the origin, 
and if ${\rm Cov}(\mu )$ is the identity matrix. Since every log-concave measure $\mu$ 
has an affine image which is isotropic, and since from the definition (\ref{definition-isotropic}) 
of $L_{\mu}$ we see that the isotropic constant is an affine invariant, the hyperplane conjecture 
reduces to the question whether there exists an absolute constant $C$ such that $L_n\ls C$ for all $n\gr 1$, where
\begin{equation*}
L_n:=\sup_{\mu\in {\cal IL}_{[n]}}\!\!L_{\mu}=\sup_{\mu\in {\cal IL}_{[n]}}\!\!\|\mu\|_{\infty}^{1/n}.
\end{equation*} 
The first upper bound for $L_n$ was given by Bourgain in \cite{Bou2}, $L_n\ll \sqrt[4]{n}\log n$, and
a few years ago Klartag \cite{Kl1} improved that bound to $L_n\ll \sqrt[4]{n}$; 
a second proof of the latter inequality is given in \cite{KM}. More detailed 
information on isotropic log-concave measures  (or more briefly in this paper, isotropic measures) 
is provided in the next section.

\medskip

In \cite{DP} Dafnis and Paouris observed that a way to obtain new bounds for $L_n$ is to study the
behaviour of the function $q\mapsto I_q(\mu )$, $q\in (-n,0)$, where
\begin{equation*}I_q(\mu ):=\left( \int_{{\mathbb R}^n}\|x\|_2^qf_{\mu}(x)dx\right)^{1/q}.\end{equation*}
For every $n$-dimensional isotropic log-concave measure $\mu$ and
every $\delta\gr 1$, they set
\begin{equation}
q_{-c}(\mu,\delta):= \max\{1\ls p\ls n-1 : I_{-p}(\mu)\gr \delta^{-1} I_2(\mu)= \delta^{-1} \sqrt{n} \}.
\end{equation} 
Then the main theorem in \cite{DP} states that for every $\delta\gr 1$,
\begin{equation}\label{eq:mainDP}
L_n\ls C\delta\sup_{\mu\in {\cal IL}_{[n]}}\sqrt{\frac{n}{q_{-c}(\mu,\delta)}}\; \log\Bigl(\frac{en}{q_{-c}(\mu,\delta)}\Bigr),
\end{equation}
where $C$ is an absolute constant. In their proofs they use a formula for the negative moments 
$I_q(\mu)$ when $q$ is an integer (see the next section for details); this formula is taken from \cite{Pa2}, 
where it is also shown that $I_{-p}(\mu)\gg \sqrt{n}/\|\mu\|_{\infty}^{1/n}$
for every log-concave probability measure $\mu$ and every $p\ls n-1$, and thus that
\begin{equation}\label{q-c and Ln} 
\inf_{\mu\in {\cal IL}_{[n]}}q_{-c}\bigl(\mu,c_0^{-1}L_n\bigr)=n-1 
\end{equation}
for some small enough absolute constant $c_0>0$.

\smallskip

The approach of Klartag and Milman in \cite{KM} makes use of another parameter for
log-concave probability measures,
\begin{equation*} q_{\ast}(\mu):= \sup\{1\ls p\ls n : k_{\ast}(Z_p(\mu))\gr p\},\end{equation*}
which was introduced by Paouris in \cite{Pa1}. Recall that if $\mu$ is a probability measure
on ${\mathbb R}^n$, then $Z_p(\mu )$ is the $L_p$-centroid body of $\mu$, namely the convex body with
support function
\begin{equation*}
\hspace{1.7cm} h_{Z_p(\mu)}(y):=\left(\int_{{\mathbb R}^n}|\langle x,y\rangle |^pd\mu (x)\right)^{1/p}, 
\qquad y\in {\mathbb R}^n,
\end{equation*}
and $k_{\ast}(Z_p(\mu))$ is the dual Dvoretzky dimension of $Z_p(\mu)$ (see \cite{Pa1} for properties
of the parameter $q_{\ast}(\mu)$). Klartag and Milman define a ``hereditary" variant of $q_{\ast}(\mu)$ by setting
\begin{equation}q_{\ast}^H (\mu) := n\inf_k \inf_{E\in G_{n,k}}\frac{q_{\ast}(\pi_E\mu)}{k},\end{equation}
where $\pi_E\mu $ is the marginal of $\mu$ with respect to the subspace $E$. Then they prove that
\begin{equation*} 
|Z_p(\mu)|^{1/n} \gr c\sqrt{\frac{p}{n}}\:[\det {\rm Cov}(\mu)]^{\frac{1}{2n}} = c\sqrt{\frac{p}{n}}
\end{equation*}
for every isotropic measure $\mu$ on ${\mathbb R}^n$, for every $p\ls q_{\ast}^H (\mu)$. In particular, this implies that
\begin{equation}\label{KM bound for Ln}
L_{\mu} \simeq \frac{1}{|Z_n(\mu)|^{1/n}}\ls
\frac{1}{|Z_{q_{\ast}^H (\mu)}(\mu)|^{1/n}} \ls C \sqrt{\frac{n}{q_{\ast}^H (\mu)}}\end{equation}
(see the next section as to why the first two relations hold).

\bigskip

Here we define two more hereditary parameters, which we will
show are more or less equivalent, and we discuss how the
results from \cite{DP} and \cite{KM} can be extended to hold for
every $p$ up to these parameters. The first one is an obvious
hereditary variant of $q_{-c}(\mu,\delta)$ following the definition
of $q_{\ast}^H(\mu)$; set
\begin{equation*}
q_{-c}^H(\mu,\delta):= n \inf_k\inf_{E\in G_{n,k}} \frac{\left\lfloor q_{-c}(\pi_E\mu,\delta)\right\rfloor}{k}
\end{equation*}
(note that the use of integer parts in the definition is not of essence, but will allow us to state some results
in a more precise way). For the second parameter, we first define
\begin{equation}\label{def:r}
r_{\sharp}(\mu, A):= \max\{1\ls k\ls n-1 :\,\exists\,E\in
G_{n,k}\ \hbox{such that}\ L_{\pi_E\mu}\ls A\}
\end{equation}
for every log-concave probability measure $\mu$ on ${\mathbb R}^n$
and every $A\gr 1$, then as previously we set
\begin{equation*}r_{\sharp}^H(\mu, A):= n \inf_k\inf_{E\in G_{n,k}} \frac{r_{\sharp}(\pi_E\mu,A)}{k}\end{equation*}
(we agree that $r_{\sharp}(\pi_{{\mathbb R}\theta}\mu,A)= q_{-c}(\pi_{{\mathbb R}\theta}\mu,A)=1$ for all 1-dimensional marginals). 

\smallskip

The following theorem holds for every $n$-dimensional isotropic measure $\mu$.

\begin{theorem}\label{th:q-cH and rH}
There exist absolute constants $C_1, C_2 > 0$ such that for every isotropic measure $\mu$ on ${\mathbb R}^n$ and every $A\gr 1$,
\begin{equation}\label{eq:q-cH and rH}
r_{\sharp}^H(\mu, A) \ls q_{-c}^H(\mu, C_1 A) \ls r_{\sharp}^H(\mu, C_2 A).
\end{equation}
Moreover, for every $p\ls r_{\sharp}^H(\mu, A)$ we have that
\begin{equation}|Z_p(\mu)|^{1/n} \gr \frac{c}{A}\sqrt{\frac{p}{n}},\end{equation}
where $c>0$ is an absolute constant. 
\end{theorem}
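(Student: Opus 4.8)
The plan is to derive everything from the integral representation of the negative moments used in \cite{DP}: for a log-concave probability measure $\nu$ on $\mathbb{R}^k$ and an integer $1\ls p\ls k-1$,
\[
I_{-p}(\nu)^{-p}=c_{k,p}\int_{G_{k,p}}\overline{f_{\pi_E\nu}}(0)\,dE ,
\]
where $\overline{f_{\pi_E\nu}}$ is the density of the marginal $\pi_E\nu$ and $c_{k,p}$ is of the order of $(2\pi/k)^{p/2}$, up to a multiplicative factor $C^{\pm p}$ with $C$ an absolute constant (this is pinned down on the Gaussian). Combining this with the Fradelizi estimate $\overline{f_\sigma}(0)\gr e^{-\dim\sigma}\|\sigma\|_\infty$ for a centered log-concave $\sigma$, and with $\|\pi_E\nu\|_\infty=L_{\pi_E\nu}^{p}$ — which holds because marginals of an isotropic measure are isotropic — one obtains, for every isotropic $\nu$ on $\mathbb{R}^k$,
\[
I_{-p}(\nu)^{-p}\simeq k^{-p/2}\int_{G_{k,p}}L_{\pi_E\nu}^{p}\,dE \qquad(\text{up to }C^{\pm p}).
\]
Thus there is an absolute constant $c$ such that $q_{-c}(\nu,\delta)\gr p$ is, up to replacing $\delta$ by $c\,\delta$, equivalent to $\bigl(\int_{G_{k,p}}L_{\pi_E\nu}^{p}\,dE\bigr)^{1/p}\ls\delta$; I will call this the \emph{averaged} description of $q_{-c}$.

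The right-hand inequality in \eqref{eq:q-cH and rH} is then immediate. If $q_{-c}(\pi_E\mu,\delta)\gr p$, the averaged description gives $\int_{G_{\dim E,p}}L_{\pi_F\mu}^{p}\,dF\ls(c\,\delta)^{p}$, so the \emph{infimum} of $L_{\pi_F\mu}$ over $p$-dimensional $F\subseteq E$ is $\ls c\,\delta$; hence $\pi_E\mu$ — and therefore $\mu$ — has a $p$-dimensional marginal of isotropic constant $\ls c\,\delta$, i.e. $q_{-c}(\pi_E\mu,\delta)\ls r_\sharp(\pi_E\mu,c\,\delta)$. Taking infima over all marginals gives $q_{-c}^H(\mu,\delta)\ls r_\sharp^H(\mu,c\,\delta)$, which is the claim with $C_2=cC_1$.

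For the left-hand inequality I would induct on $n$. Put $R=r_\sharp^H(\mu,A)$; we must show $\lfloor q_{-c}(\pi_E\mu,C_1A)\rfloor\gr(\dim E)R/n$ for every $E$. If $\dim E<n$ this follows from the inductive hypothesis applied to $\pi_E\mu$, from $\lfloor q_{-c}(\pi_E\mu,C_1A)\rfloor\gr q_{-c}^H(\pi_E\mu,C_1A)$, and from the trivial monotonicity $r_\sharp^H(\pi_E\mu,A)\gr(\dim E)R/n$. There remains the case $E=\mathbb{R}^n$, i.e. one must prove $I_{-p}(\mu)\gr(C_1A)^{-1}\sqrt n$ for $p$ up to (essentially) $R$. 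Here I would decompose $\mathbb{R}^n=E_1\oplus\cdots\oplus E_m$ into mutually orthogonal blocks of roughly equal dimension $\sim n/m$ and use that $\|x\|_2^{2}=\sum_i\|\pi_{E_i}x\|_2^{2}$, which via AM--GM and Hölder gives the superadditivity
\[
I_{-p}(\mu)\gr\sqrt{m}\,\Bigl(\prod_{i=1}^{m}I_{-p}(\pi_{E_i}\mu)\Bigr)^{1/m};
\]
by the inductive hypothesis $I_{-p}(\pi_{E_i}\mu)\gr(C_1A)^{-1}\sqrt{\dim E_i}$ as long as $p\ls(\dim E_i)R/n$, and substituting yields $I_{-p}(\mu)\gr(C_1A)^{-1}\sqrt n$ in that range. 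I expect the main obstacle to be exactly here — pushing the range all the way up to $R$, rather than only to $R/m$, while losing no more than an absolute factor in $A$: this forces one to take the $E_i$ nearly of full dimension and to track carefully the integer parts and the choice of $C_1$. One also isolates the easy range $p\ls c\sqrt n$, where $I_{-p}(\mu)\simeq\sqrt n$ for every isotropic $\mu$ and no heredity is needed.

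For the volume estimate I would follow \cite{KM}, tracking the role of the parameter $q_{\ast}^H$. The two facts to import are the identity $\pi_E Z_p(\mu)=Z_p(\pi_E\mu)$ and the relation $|Z_d(\sigma)|^{1/d}\simeq 1/L_\sigma$ for $d$-dimensional isotropic $\sigma$: together with $Z_p(\pi_F\mu)\supseteq Z_d(\pi_F\mu)$ for $d\ls p$, they show that a marginal $\pi_F\mu$ with $\dim F=d\ls p$ and $L_{\pi_F\mu}\ls A$ produces a $d$-dimensional projection of $Z_p(\mu)$ of volume radius $\gr c/A$. Since $r_\sharp^H(\mu,A)\gr p$ supplies, inside every subspace, such a marginal of dimension proportional to that subspace, one can run the $Z_p$-argument of \cite{KM} with $r_\sharp^H$ in place of $q_{\ast}^H$, each local lower bound of order $1$ on a volume radius being replaced by one of order $1/A$; reassembling these as in \cite{KM} gives $|Z_p(\mu)|^{1/n}\gr(c/A)\sqrt{p/n}$ (the small range $p\ls c\sqrt n$, where the bound holds unconditionally by \cite{Pa1}, is again treated separately). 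Alternatively, once \eqref{eq:q-cH and rH} is proved, the volume bound may instead be deduced from the analogous statement with $q_{-c}^H$ in place of $r_\sharp^H$, which sits directly in the \cite{DP}/\cite{KM} framework.
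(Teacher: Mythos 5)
Your proof of the right-hand inequality $q_{-c}^H(\mu,C_1A)\ls r_{\sharp}^H(\mu,C_2A)$ is correct and is essentially the paper's argument: one combines Paouris' formula $I_{-p}(\nu)^{-p}\simeq c_{n,p}^{-p}\int_{G_{n,p}}f_{\pi_F\nu}(0)\,dF$ with Fradelizi's inequality to extract a single good $p$-dimensional marginal, and then passes to the hereditary parameter by applying this on every subspace.

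For the left-hand inequality, however, there is a genuine gap, and your own comment flags it without resolving it. The superadditivity $I_{-p}(\mu)\gr\sqrt{m}\bigl(\prod_iI_{-p}(\pi_{E_i}\mu)\bigr)^{1/m}$ is only available for $p<\min_i\dim E_i$, so with $m$ roughly-equal orthogonal blocks you recover the bound $I_{-p}(\mu)\gr(C_1A)^{-1}\sqrt n$ only for $p\ls R/m$ with $m\gr 2$ (the case $m=1$ is vacuous). You cannot take the blocks ``nearly of full dimension'' when there are at least two of them, so the decomposition never reaches $p$ up to $R=r_{\sharp}^H(\mu,A)$. The paper does not attempt a direct argument at all: it first proves the volume estimate $|Z_p(\mu)|^{1/n}\gr(c/A)\sqrt{p/n}$ for $p\ls r_{\sharp}^H(\mu,A)$, and then reads off $r_{\sharp}^H(\mu,A)\ls\lfloor q_{-c}(\mu,C_1A)\rfloor$ from it via $w_{-\lceil p_A\rceil}\gr w_{-n}\gr|Z_{\lceil p_A\rceil}(\mu)|^{1/n}/\omega_n^{1/n}$ and $I_{-k}(\mu)\simeq\sqrt{n/k}\,w_{-k}(Z_k(\mu))$; the hereditary version then follows by applying this to each marginal together with the monotonicity $r_{\sharp}^H(\mu,A)\ls\tfrac{n}{\dim E}\,r_{\sharp}^H(\pi_E\mu,A)$. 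This deduction of the moment estimate \emph{from} the volume estimate is the missing idea in your plan.

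On the volume estimate itself, your strategy is the paper's (run the Klartag--Milman argument with $r_{\sharp}^H$ in place of $q_*^H$, losing a factor $1/A$), but the sketch leaves out the substance. The real work is with the tilted measures $\mu_x$: one orders the eigenvalues $\lambda_1^x\ls\cdots\ls\lambda_n^x$ of ${\rm Cov}(\mu_x)$, bounds $\sqrt{\lambda_k^x}\gr c\,\sup_{F\subseteq E_k,\dim F=s}|Z_s(\pi_F\mu_x)|^{1/s}$, chooses $s=r_{\sharp}(\pi_{E_k}\mu,A)$, and crucially must pass from $Z_s(\pi_F\mu_x)$ back to $Z_s(\pi_F\mu)$. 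When $s<p$ this costs a factor $\simeq s/p$, and it is precisely the definition of the hereditary parameter that guarantees $s/p\gr k/n$, leading to $\det{\rm Cov}(\mu_x)^{1/2}\gr(c/A)^n\,n!/n^n$. None of this appears in your outline. Finally, your proposed alternative — deducing the volume estimate from ``the analogous statement with $q_{-c}^H$'' — relies on a statement that does not sit in either \cite{DP} or \cite{KM}: lower bounds on $I_{-p}$ do not yield lower bounds on $|Z_p(\mu)|$ (the implication in \eqref{Ip and volume of Zp} goes the other way), so that route would itself require the Klartag--Milman machinery you are trying to sidestep.
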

\begin{remark*}
\rm Note that, as in (\ref{KM bound for Ln}), Theorem \ref{th:q-cH and rH} implies that
\begin{equation}\label{eq:L, r and q-c}
L_{\mu}\ls C A \sqrt{\frac{n}{r_{\sharp}^H (\mu, A)}}\ls 
C A \sqrt{\frac{n}{q_{-c}^H\Bigl(\mu, \tfrac{C_1}{C_2} A\Bigr)}}
\end{equation}
(to be precise, the second inequality of (\ref{eq:L, r and q-c}) makes sense once we assume that 
$A$ is larger than some $A_0\simeq 1$).
\end{remark*}

\medskip

Recall that the main result of \cite{Pa2} states that if $\mu $ is an isotropic measure on ${\mathbb R}^n$
then 
\begin{equation}\label{bound for q-c}
q_{-c}(\mu ,\delta_0)\gg q_{\ast}(\mu)\gr c_1\sqrt{n},
\end{equation} 
where $c_1>0$ is an absolute constant and $\delta_0\simeq 1$. Since every
marginal $\pi_E\mu $ of an isotropic measure $\mu $ is also isotropic, 
(\ref{bound for q-c}) implies that $q_{-c}(\pi_E\mu ,\delta_0)\gr c_1\sqrt{k}$ 
for every $E\in G_{n,k}$, and hence that
\begin{equation}\label{bound for q-cH}
q_{-c}^H(\mu ,\delta_0)\gg q_{\ast}^H(\mu)\gr c_1\sqrt{n}.
\end{equation} 
Then Theorem \ref{th:q-cH and rH} tells us that $r_{\sharp }^H(\mu ,A_1)$ as well is
at least of the order of $\sqrt{n}$ for some $A_1\simeq 1$ and every isotropic measure $\mu$ on ${\mathbb R}^n$.
Note that, since (\ref{eq:L, r and q-c}) holds true for every constant $A\gr A_0\simeq 1$, 
replacing $q_{-c}(\mu ,A)$ by $q_{-c}^H(\mu ,A)$ one can remove the logarithmic term in (\ref{eq:mainDP}), 
and slightly improve the bounds for $L_n$ that the approach of Dafnis and Paouris can give us 
(in those cases of course that the estimates we have for the two parameters are of the same order, 
as for example in (\ref{bound for q-c}) and (\ref{bound for q-cH})).

On the other hand, the example of the suitably normalised uniform measure on $B_1^n$,
the unit ball of $\ell_1^n$, shows that there exist isotropic log-concave measures
$\mu$ on ${\mathbb R}^n$ for which $q_{\ast}(\mu)\simeq \sqrt{n}$, and hence 
$q_{\ast }^H(\mu )\simeq \sqrt{n}$. It could be that, even for those measures,
$q_{-c}^H(\mu ,\delta_0)$ is much larger than $\sqrt{n}$, and actually if the hyperplane conjecture
is correct, we see from (\ref{q-c and Ln}) that $q_{-c}^H(\mu, \delta_1)$ has to be of the order
of $n$ for some $\delta_1\simeq c_0^{-1}L_n\simeq 1$. This shows that the choice of the parameters
$r_{\sharp }^H(\mu ,\cdot)$ and $q_{-c}^H(\mu,\cdot)$ should permit us to extend the range of $p$ with which
the method of Klartag and Milman can be applied. Moreover, the parameter $r_{\sharp }(\mu ,A)$, which by definition (\ref{def:r}) is the highest dimension $k\ls n-1$ in which we can find marginals 
of $\mu$ with isotropic constant bounded above by $A$, seems worth studying in its own right. Thus, in Section 4 we
list a few things that we already know about the isotropic constant of marginals. Our main observation there is the following

\begin{proposition}\label{prop:bad measures}
There exist isotropic measures $\mu$ on ${\mathbb R}^n$ with $L_{\mu}\simeq L_n$ 
such that for every $\lambda \in (0,1)$ and every positive integer $k=\lambda n$, we have that
\begin{equation}L_{\pi_E\mu}\gr C^{-\frac{1}{\lambda}} L_{\mu}\end{equation}
for every subspace $E\in G_{n,k}$, where $C\gr 1$ is an absolute constant.
\end{proposition}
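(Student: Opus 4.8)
The plan is to show that \emph{any} near-extremal measure already has the stated property: fix $\mu\in{\cal IL}_{[n]}$ with $L_{\mu}\gr\tfrac12 L_n$ (such a $\mu$ exists since $L_n=\sup_{\nu\in{\cal IL}_{[n]}}L_{\nu}$), and argue that near-extremality \emph{by itself} forces every marginal of $\mu$ to be large. The conclusion genuinely fails for general isotropic $\mu$ --- for instance for $\nu\otimes\gamma$ with $\nu$ near-extremal on a half-dimensional subspace and $\gamma$ a Gaussian on its complement, the marginal onto the Gaussian block has isotropic constant $\simeq 1$ --- so an existence statement is the right formulation.

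The engine is the classical estimate comparing the isotropic constant of a log-concave measure with those of a complementary marginal--section pair: for every $E\in G_{n,k}$,
\begin{equation*}
L_{\mu}^{\,n}\ \ls\ C_0^{\,n}\,\bigl(L_{\pi_E\mu}\bigr)^{k}\,\bigl(L_{\mu\cap E^{\perp}}\bigr)^{\,n-k},
\end{equation*}
where $\mu\cap E^{\perp}$ is the log-concave probability measure obtained by restricting the density of $\mu$ to $E^{\perp}$ and renormalising, and $C_0$ is absolute. I would obtain this by disintegrating $\mu$ along its marginal on $E$, writing $f_{\mu}(y+z)=f_{\pi_E\mu}(y)\,f_{\mu_y}(z)$ for $y\in E$, $z\in E^{\perp}$, evaluating at the barycentre $0$ of $\mu$ (whose $E$-component is the barycentre of $\pi_E\mu$), and combining Fradelizi's bound $f_{\mu}(0)\gr e^{-n}\|\mu\|_{\infty}$ with $f_{\pi_E\mu}(0)\ls\|\pi_E\mu\|_{\infty}$ and $f_{\mu_0}(0)\ls\|\mu_0\|_{\infty}=(L_{\mu_0})^{n-k}\,[\det{\rm Cov}(\mu_0)]^{-1/2}$; here $\mu_0=\mu\cap E^{\perp}$, and the last factor is harmless precisely because $[\det{\rm Cov}(\mu_0)]^{1/(n-k)}\gr c$ --- i.e. a central conditional of an isotropic measure is not degenerately concentrated along any direction of $E^{\perp}$.

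Granting this, the rest is bookkeeping. Trivially $L_{\mu\cap E^{\perp}}\ls L_{n-k}$, and since the sequence $(L_m)_m$ is non-decreasing up to an absolute constant we get $L_{\mu\cap E^{\perp}}\ls C_1 L_n\ls 2C_1 L_{\mu}$. Substituting into the displayed inequality and solving for $L_{\pi_E\mu}$,
\begin{equation*}
\bigl(L_{\pi_E\mu}\bigr)^{k}\ \gr\ \frac{L_{\mu}^{\,n}}{C_0^{\,n}\,(2C_1L_{\mu})^{\,n-k}}\ =\ (2C_1)^{\,k}\,(2C_0C_1)^{-n}\,L_{\mu}^{\,k},
\end{equation*}
so that with $1/\lambda=n/k$ one gets $L_{\pi_E\mu}\gr 2C_1\,(2C_0C_1)^{-1/\lambda}\,L_{\mu}\gr C^{-1/\lambda}L_{\mu}$ for the absolute constant $C:=2C_0C_1$, which is exactly the assertion.

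The crux --- and where I expect the real work --- is the comparison inequality, and inside it the uniform lower bound $[\det{\rm Cov}(\mu_0)]^{1/(n-k)}\gr c$ for conditionals of an isotropic measure through the barycentre, valid over all subspaces: equivalently, that passing to a central subspace shrinks the spread of an isotropic log-concave measure by at most a constant factor per dimension. This is the standard-but-not-trivial ingredient (it is the substance of the classical section/marginal estimates for isotropic bodies and measures), and is what one would cite or reprove; by contrast the near-extremality input, the use of Fradelizi's theorem, and the final arithmetic are routine.
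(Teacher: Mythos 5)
Your overall plan is sound and the final bookkeeping is correct, but the argument hinges on a lemma that is not established, not standard, and in fact is essentially the whole content of the proposition. You write the comparison step as
\begin{equation*}
e^{-n}L_{\mu}^{\,n}\ \ls\ L_{\pi_E\mu}^{\,k}\,L_{\mu_0}^{\,n-k}\,[\det{\rm Cov}(\mu_0)]^{-1/2},
\end{equation*}
which indeed follows from disintegration and Fradelizi, and then declare the last factor ``harmless'' because $[\det{\rm Cov}(\mu_0)]^{1/(n-k)}\gr c$ for central conditionals of isotropic measures, over all subspaces $E$. This is not a known fact, and it is not the ``substance of the classical section/marginal estimates'': the classical estimates (Hensley, Ball, Fradelizi) control \emph{marginals} and \emph{hyperplane} sections, and give nothing like a dimension-free per-coordinate lower bound on the covariance of a section of codimension proportional to $n$. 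In the case the proposition actually addresses (uniform measure $\mu_K$ on an isotropic body $K$ with ${\rm Cov}=I$ and $L_{\mu}\simeq L_n$), one has $[\det{\rm Cov}(\mu_0)]^{1/(2(n-k))}=L_{\mu_0}\,|K\cap E^{\perp}|^{1/(n-k)}$, so your claim is equivalent to a uniform lower bound on the volume of central sections, $|K\cap E^{\perp}|^{1/(n-k)}\gr c/L_{\mu_0}$, over \emph{all} subspaces. Nothing of this strength is available off the shelf; naive Rogers--Shephard gives only $|K\cap E^{\perp}|\gr |K|/|{\rm Proj}_E K|$, and $|{\rm Proj}_E K|$ is a priori as large as $(c\sqrt{k})^k$, which after dividing by $|K|=L_{\mu}^{-n}$ and taking roots loses polynomial-in-$n$ factors per dimension. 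Even what one can extract from the paper's own section estimate is only $[\det{\rm Cov}(\mu_0)]^{1/(2(n-k))}\gr c\,L_{\mu_0}/\bigl(D^{1/(1-\lambda)}L_{\mu}\bigr)$, which is $\gr c$ only if $L_{\mu_0}\gtrsim L_{\mu}$ --- an additional heredity statement you have not justified. So this step is the genuine gap; the near-extremality input, the use of Fradelizi, and the arithmetic at the end are all fine, but they carry no weight until the covariance (equivalently, section-volume) lower bound is actually proved.

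For comparison, the paper does not try to prove such a pointwise covariance bound directly. It restricts to $\mu=\mu_K\in{\cal IK}_{[n]}$ (which suffices since $L_n$ is attained up to constants on bodies), and obtains the needed lower bound on $|K\cap E^{\perp}|$ by a detour through the $M$-ellipsoid ${\cal E}_K$ of $K$: a first Rogers--Shephard to bound $\min_{F}|{\rm Proj}_F K|$ from below via $f_{\pi_F\mu}(0)\ls L_{\pi_F\mu}^{\,n-k}\ls L_{n-k}^{\,n-k}$ together with the Bourgain--Klartag--Milman bound $L_{n-k}\ls b_1 L_n$; covering to transfer this to $\min_F|{\rm Proj}_F{\cal E}_K|$; the ellipsoid identity $\max_H|{\rm Proj}_H{\cal E}|=\max_H|{\cal E}\cap H|$ and Rogers--Shephard again to turn a projection lower bound into a section upper bound for ${\cal E}_K$; covering back to bound $\max_E|{\rm Proj}_E K|$; and a final Rogers--Shephard to get the desired lower bound on $\min_E|K\cap E^{\perp}|$. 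None of these ingredients appear in your proposal, and the $M$-ellipsoid step in particular is what makes the constants work out to $C^{-1/\lambda}$ rather than degrading by powers of $n$ or of $L_n$. If you want to salvage your disintegration approach, you would need to actually prove the section-volume (equivalently, section-covariance) lower bound you are citing, and the natural way to do that is precisely the $M$-ellipsoid argument --- at which point you are reproducing the paper's proof.
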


\medskip

The rest of the paper is organised as follows. In Section 2 we recall the background material that we need.
Theorem \ref{th:q-cH and rH} is proved in Section 3, and a few final remarks about it, including Proposition
\ref{prop:bad measures}, are discussed in Section 4.

\medskip

\noindent {\bf Acknowledgements.} The author is supported by a scholarship from the University of Athens.
She would like to thank her supervisor Apostolos Giannopoulos for all his help and encouragement.

\section{Background material}

\subsection{Notation and preliminaries}

We work in ${\mathbb R}^n$, which is equipped with a Euclidean
structure $\langle\cdot ,\cdot\rangle $. We denote the corresponding
Euclidean norm by $\|\cdot \|_2$, and write $B_2^n$ for the
Euclidean unit ball, and $S^{n-1}$ for the unit sphere. Volume is
denoted by $|\cdot |$. We write $\omega_n$ for the volume of $B_2^n$
and $\sigma $ for the rotationally invariant probability measure on
$S^{n-1}$. The Grassmann manifold $G_{n,k}$ of $k$-dimensional subspaces of
${\mathbb R}^n$ is equipped with the Haar probability measure
$\nu_{n,k}$. Let $k\ls n$ and $F\in G_{n,k}$. We will denote the
orthogonal projection from $\mathbb R^{n}$ onto $F$ by ${\rm
Proj}_F$. We also define $B_F:=B_2^n\cap F$ and $S_F:=S^{n-1}\cap
F$.

The letters $c,c^{\prime }, c_1, c_2$ etc. denote absolute positive
constants whose value may change from line to line. Whenever we
write $a\simeq b$ (or $a\ll b$), we mean that there exist absolute constants
$c_1,c_2>0$ such that $c_1a\ls b\ls c_2a$ (or $a\ls c_1b$).  Also if $K,L\subseteq
\mathbb R^n$, we will write $K\simeq L$ if there exist absolute
constants $c_1, c_2>0$ such that $c_{1}K\subseteq L \subseteq
c_{2}K$.

\medskip

A convex body $K$ in ${\mathbb R}^n$ is a compact convex subset of
${\mathbb R}^n$ with non-empty interior. We say that $K$ is symmetric
if $x\in K$ implies that $-x\in K$. We say that $K$ is centered if
the barycentre of $K$ is at the origin; recall that the barycentre of $K$
is the vector
\begin{equation}\label{def:barK}
{\rm bar}(K):=\frac{1}{|K|}\int_K xdx= \frac{\int_{{\mathbb R}^n} x {\bf 1}_K(x)dx}{\int_{{\mathbb R}^n}{\bf 1}_K(x)dx}.
\end{equation} 
The support function of a convex body $K$ is defined by
\begin{equation*}h_K(y):=\max \{\langle x,y\rangle :x\in K\},\end{equation*}
and the mean width of $K$ is
\begin{equation*}w(K):=\int_{S^{n-1}}h_K(\theta)\,d\sigma (\theta).\end{equation*}
Also, for each $-\infty < q<\infty $, $q\neq 0$, we define the $q$-mean
width of $K$ by
\begin{equation*}w_q(K):=\left(\int_{S^{n-1}}h_K^q(\theta)\,d\sigma (\theta)\right)^{1/q}.\end{equation*}
If the origin is an interior point of $K$, the polar body
$K^{\circ }$ of $K$ is defined as follows:
\begin{equation*}K^{\circ}:=\{y\in {\mathbb R}^n: \langle x,y\rangle \ls 1\;\hbox{for all}\; x\in K\}.\end{equation*}
Since the reciprocal of the support function of $K$ is the radial function of $K^{\circ}$, i.e.
$h_K^{-1}(y)=\max\{r>0 : ry\in  K^{\circ}\}$ for all $y\neq 0$, 
integration in polar coordinates and Santal\'{o}'s inequality show that
\begin{equation}w_{-n}(K)=\frac{|B_2^n|^{1/n}}{|K^{\circ}|^{1/n}}\gr \frac{|K|^{1/n}}{|B_2^n|^{1/n}}\end{equation}
for every centered convex body $K$.

For basic facts from the Brunn-Minkowski theory, the asymptotic
theory of finite dimensional normed spaces and the theory of isotropic convex bodies,
we refer to the books \cite{Sch}, \cite{MS} and \cite{Pisier} and to the online notes \cite{Gian}.

\medskip

We write ${\mathcal{P}}_{[n]}$ for the class of all Borel
probability measures on $\mathbb R^n$ which are absolutely
continuous with respect to the Lebesgue measure. The density of $\mu
\in {\mathcal{P}}_{[n]}$ is denoted by $f_{\mu}$. A measure $\mu$ on
$\mathbb R^n$ is called logarithmically-concave (or log-concave) if 
\begin{equation*}
\mu(\lambda A+(1-\lambda)B) \geq \mu(A)^{\lambda}\mu(B)^{1-\lambda}
\end{equation*}
for any Borel subsets $A$ and $B$ of ${\mathbb R}^n$ and any $\lambda \in (0,1)$. A function
$f:\mathbb R^n \rightarrow [0,\infty)$ is called log-concave if
$\log{f}$ is concave on its support $\{f>0\}$. It is known that if a
probability measure $\mu$ on ${\mathbb R}^n$ is log-concave and $n$-dimensional
(by that we mean $\mu (H)<1$ for every hyperplane $H$ of ${\mathbb R}^n$), 
then $\mu \in {\mathcal{P}}_{[n]}$ and its density $f_{\mu}$ is log-concave. 
Note that if $K$ is a convex body in $\mathbb R^n$, then the Brunn-Minkowski inequality implies that
${\bf 1}_{K} $ is the density of a log-concave measure. As in (\ref{def:barK}), we define the barycentre
\begin{equation*}
{\rm bar}(\mu):= \frac{\int_{{\mathbb R}^n} x f_{\mu}(x)dx}{\int_{{\mathbb R}^n}f_{\mu}(x)dx} 
\end{equation*}
for every finite measure $\mu$ with density $f_{\mu}$,
and we say that $\mu$ is centered if ${\rm bar}(\mu)=0$.
We have already mentioned in the Introduction that we denote the
class of $n$-dimensional isotropic log-concave measures
by ${\cal IL}_{[n]}$: these are the centered, log-concave probability measures $\mu$ on
${\mathbb R}^n$ with the property that ${\rm Cov}(\mu)$ is the identity matrix.
It is well-known that every log-concave probability measure can be made isotropic 
by an affine transformation; see e.g. \cite[Proposition 1.1.1]{Gian} for 
the argument in the setting of convex bodies.

For every $\mu\in {\cal P}_{[n]}$ we define the marginal of $\mu$
with respect to the $k$-dimensional subspace $E$ setting
\begin{equation*}\pi_E(\mu)(A):=\mu({\rm Proj}_E^{-1}(A))=\mu(A+ E^{\perp})\end{equation*} 
for all Borel subsets of $E$. The density of $\pi_E\mu $ is the function
\begin{equation}\label{definitionmarginal}
f_{\pi_E\mu }(x)= \int_{x+ E^{\perp}} f_{\mu }(y) dy, \qquad x\in E.
\end{equation}
It is easily checked that if $\mu$ is centered, log-concave or isotropic, 
then $\pi_E\mu $ is respectively also centered, log-concave or isotropic.
In particular, if $\mu\in {\cal IL}_{[n]}$ then
\begin{equation*}\det {\rm Cov}(\pi_F\mu)=\det {\rm Cov}(\mu)=1\end{equation*}
for every $1\ls k\ls n$ and every $F\in G_{n,k}$.

\smallskip

If $\mu$ is a probability measure on ${\mathbb R}^n$, we define the $L_q$-centroid body 
$Z_q(\mu)$, $q\gr 1$, to be the centrally symmetric convex body with support function
\begin{equation*}
\hspace{1.7cm} h_{Z_q(\mu)}(y):= \left(\int |\langle x,y\rangle|^{q}d\mu (x) \right)^{1/q}, 
\qquad y\in {\mathbb R}^n.
\end{equation*}
Note that a log-concave probability measure $\mu$ is isotropic if and only if it is centered and
$Z_2(\mu)=B_2^n$. From H\"{o}lder's inequality it follows that
$Z_1(\mu )\subseteq Z_p(\mu )\subseteq Z_q(\mu )$ for all $1\ls p\ls q<\infty $. 
Using Borell's lemma (see \cite[Appendix III]{MS}), one can check that inverse inclusions also hold:
\begin{equation}\label{reverse inclusion for Zq} Z_q(\mu)\subseteq c\frac{q}{p}Z_p(\mu)\end{equation}
for all $1\ls p<q$, where $c$ is an absolute constant. In particular, if $\mu$ is isotropic, then 
$R(Z_q(\mu)):=\max\{h_{Z_q(\mu)}(\theta):\theta\in S^{n-1}\} \ls cq$.

We will use two basic formulas for the $L_q$-centroid bodies which were obtained in \cite{Pa1}
and \cite{Pa2}. First, for every probability measure $\mu$ on ${\mathbb R}^n$, every $1\ls k\ls n$ and 
every subspace $E\in G_{n,k}$, we have 
\begin{equation}\label{marginalZp}
{\rm Proj}_E(Z_q(\mu)) = Z_q(\pi_E(\mu)).
\end{equation}
Furthermore, if $\mu$ is centered and log-concave, then
\begin{equation}\label{functionalLqRS}
[f_{\mu}(0)]^{1/n} \cdot |Z_n(\mu)|^{1/n}\simeq 1.
\end{equation}
From a result of Fradelizi \cite{Frad} we also know that, when $\mu$ is
centered and log-concave, 
\begin{equation}\label{eq:Fradelizi}
\|\mu\|_{\infty }^{1/n}\ls e\,[f_{\mu}(0)]^{1/n},
\end{equation}
therefore for the measures $\mu\in {\cal IL}_{[n]}$ (\ref{functionalLqRS}) becomes
\begin{equation}\label{functionalLqRS-isotropic}
L_{\mu} \cdot |Z_n(\mu)|^{1/n}\simeq 1.
\end{equation}

\subsection{Basic tools and relations}

We now recall some basic relations that were established
in \cite{DP} and \cite{Pa2} and in \cite{KM} and involve the main
objects that are used to prove the key results in those articles.
The first one is a formula relating the negative moments of the
Euclidean norm with respect to a centered, log-concave probability
measure $\mu$ on ${\mathbb R}^n$ to negative mean widths of the
$L_q$-centroid bodies of $\mu$. Recall that the quantity $I_q(\mu)$ 
is defined for every $q\in (-n,\infty )$, $q\neq 0$, by
\begin{equation*}I_q(\mu ):=\left( \int_{{\mathbb R}^n}\|x\|_2^qf(x)dx\right)^{1/q}.\end{equation*}
In \cite{Pa2} it is proven that
\begin{equation}\label{negative Euclidean moment}
I_{-k}(\mu) = c_{n,k} \left(\int_{G_{n,k}}f_{\pi_E\mu}(0)\,d\nu_{n,k}(E)\right)^{-1/k}
\end{equation}
for every positive integer $k\ls n-1$, where
\begin{equation*}c_{n,k} = \left(\frac{(n-k)\omega_{n-k}}{n\omega_n}\right)^{1/k} \simeq \sqrt{n}.\end{equation*}
Complementally, it is shown that
\begin{equation}
w_{-k}(Z_k(\mu)) \simeq \sqrt{k}\left(\int_{G_{n,k}}|{\rm Proj}_E(Z_k(\mu))|^{-1} d\nu_{n,k}(E)\right)^{-1/k}.
\end{equation}
Since ${\rm Proj}_E(Z_q(\mu )) = Z_q(\pi_E(\mu))$, we have from (\ref{functionalLqRS}) that
\begin{equation*}|{\rm Proj}_E(Z_k(\mu))|^{-1/k} \simeq f_{\pi_E\mu}(0)^{1/k}.\end{equation*}
Therefore, for every positive integer $k\ls n-1$,
\begin{equation}\label{Wp and Ip} I_{-k}(\mu) \simeq \sqrt{\frac{n}{k}}w_{-k}\bigl(Z_k(\mu)\bigr).\end{equation}

\bigskip

We now turn our attention to the tools and relations that are used in the arguments of \cite{KM}.
The primary tool there, which was introduced by Klartag for the first time in arguments
related to the slicing problem (see \cite{Kl1}), is the logarithmic Laplace transform of the measure $\mu$.
Recall that for any finite Borel measure $\mu$ on ${\mathbb R}^n$, its logarithmic Laplace transform is defined by
\begin{equation*}
\hspace{1.3cm} 
\Lambda_{\mu}(\xi):= \log \left(\int_{{\mathbb R}^n}e^{\langle x,\xi\rangle}\frac{d\mu(x)}{\mu({\mathbb R}^n)}\right),
\qquad\xi\in {\mathbb R}^n.
\end{equation*}
Through $\Lambda_{\mu}$ we can define a whole family of probability measures $\mu_x$ whose $L_q$-centroid bodies almost
coincide with the corresponding $L_q$-centroid body of $\mu$. Indeed, consider first the
symmetrised level-sets of the logarithmic Laplace transform of $\mu$, namely the bodies
\begin{equation*}
\Lambda_p(\mu):= \{x\in {\mathbb R}^n : \Lambda_{\mu}(x) \ls p\ \hbox{and}\ \Lambda_{\mu}(-x)\ls p\},\ \  p\gr 0.
\end{equation*}
As is proven in \cite[Lemma 2.3]{KM}, when $\mu$ is a centered, log-concave probability measure, it holds that
\begin{equation}\label{Lp and Zp} \Lambda_p(\mu) \simeq p (Z_p(\mu))^{\circ}\end{equation}
for every $p\gr 1$ (a dual version of this was first observed by {\L}atala and Wojtaszczyk in \cite{LW}).
When $\mu$ is log-concave, we also have that $\{\Lambda_{\mu}<\infty \}$ is an open set, and that $\Lambda_{\mu}$
is $C^{\infty}$-smooth and strictly-convex in this open set (see e.g. \cite[Section 2]{Kl2}). For every
$x\in \{\Lambda_{\mu}<\infty\}$, we denote by $\mu'_x$ the probability measure whose density is proportional to the function
$e^{\langle z,x\rangle}f_{\mu}(z)$, where $f_{\mu}$ is the density of the measure $\mu$. In other words, $\mu'_x$
is the measure with density
\begin{equation*}
f_{\mu'_x}(z):= \frac{e^{\langle z,x\rangle}f_{\mu}(z)}{\int_{{\mathbb R}^n}e^{\langle z,x\rangle}d\mu(z)}.
\end{equation*}
It is straightforward to check that the barycentre and the
covariance matrix of $\mu'_x$ are exactly the first and second
derivatives of $\Lambda_{\mu}$ at $x$:
\begin{equation*}
{\rm bar}(\mu'_x) = \nabla\Lambda_{\mu}(x)\ \ \hbox{and}\ \ {\rm Cov}\mu'_x = {\rm Hess}\;\!\Lambda_{\mu}(x).
\end{equation*}
We now write $\mu_x$ for the centered probability measure with density
$f_{\mu_x}(z):= f_{\mu'_x}(z + {\rm bar}(\mu'_x))$.
One of the key observations in \cite{KM} is that, whenever $x\in \frac{1}{2}\Lambda_p(\mu)$, we have
\begin{equation*}\Lambda_q(\mu) \simeq \Lambda_q(\mu_x)\ \hbox{for every}\ q \gr p,\end{equation*}
or equivalently, because of (\ref{Lp and Zp}),
\begin{equation}\label{perturbations of Zp} Z_q(\mu) \simeq Z_q(\mu_x)\ \hbox{for every}\ q \gr p.\end{equation}

The other fundamental relation that Klartag and Milman arrive at is the following: if $\mu$ is a centered, log-concave
probability measure on ${\mathbb R}^n$, then for every $p\in [1,n]$ we have that
\begin{align}\label{bound for volume of Zp}
|Z_p(\mu)|^{1/n} &\simeq \sqrt{\frac{p}{n}}
\left(\frac{1}{\big|\tfrac{1}{2}\Lambda_p(\mu)\big|} \int_{\frac{1}{2}\Lambda_p(\mu)}{\rm det\;\! Cov}(\mu_x)\,dx\right)^{\frac{1}{2n}}
\\
\nonumber        &\simeq \sqrt{\frac{p}{n}} \inf_{x\in \frac{1}{2}\Lambda_p(\mu)} [{\rm det\;\! Cov}(\mu_x)]^{\frac{1}{2n}}.
\end{align}
An initial conclusion we can draw from this is that if $x_0\in \frac{1}{2}\Lambda_p(\mu)$ is such that
\begin{equation*} 
[\det {\rm Cov}(\mu_{x_0})]^{\frac{1}{2n}}\simeq 
\inf_{x\in \frac{1}{2}\Lambda_p(\mu)} [\det {\rm Cov}(\mu_x)]^{\frac{1}{2n}}, 
\end{equation*} 
then, using (\ref{perturbations of Zp}) as well, we get that
\begin{equation*}
|Z_p(\mu_{x_0})|^{1/n} \simeq \sqrt{\frac{p}{n}}\:[\det {\rm Cov}(\mu_{x_0})]^{\frac{1}{2n}}.
\end{equation*}
The aim of course is to show a similar relation for the measure $\mu$ instead of $\mu_{x_0}$, and to accomplish this
we need to be able to prove that
\begin{equation}\label{bound detCov(mux)}
[\det {\rm Cov}(\mu_{x_0})]^{\frac{1}{2n}}\gr
\frac{1}{A}[\det {\rm Cov}(\mu)]^{\frac{1}{2n}}
\end{equation}
for as small a constant $A\gr 1$ as possible. In the next
section we will carefully revisit the final steps of the argument in
\cite{KM} and we will explain why we can establish (\ref{bound
detCov(mux)}) for every $p\ls r_{\sharp}^H(\mu, cA)$ (where
$c>0$ is a constant independent of the measure $\mu$, the dimension
$n$ or the parameter $A$).

\section{Proof of Theorem \ref{th:q-cH and rH}}

The first thing we have to show is that if $\mu$ is an isotropic
measure on ${\mathbb R}^n$ and $p\ls r_{\sharp}^H(\mu,A)$, then
\begin{equation*}
|Z_p(\mu)|^{1/n} 
\gr \frac{c}{A}\sqrt{\frac{p}{n}}\:[\det {\rm Cov}(\mu)]^{\frac{1}{2n}} =
\frac{c}{A}\sqrt{\frac{p}{n}}
\end{equation*}
for some absolute constant $c>0$. In order to do that, we recall that given (\ref{bound for volume of Zp}) we have to show that
\begin{equation*}
[\det {\rm Cov}(\mu_{x})]^{\frac{1}{2n}}\gr \frac{c'}{A}
\end{equation*}
for every $x\in \frac{1}{2}\Lambda_p(\mu)$. We denote the eigenvalues of ${\rm Cov}(\mu_x)$ by
$\lambda_1^x\ls \lambda_2^x\ls\cdots\ls \lambda_n^x$, and we write $E_k$ for the $k$-dimensional subspace which is spanned
by eigenvectors corresponding to the first $k$ eigenvalues of ${\rm Cov}(\mu_x)$. We start with the following lemma
which is essentially the same as \cite[Lemma 5.2]{KM} (we include its proof for the reader's convenience).

\begin{lemma}\label{max eigenvalue and volume of Zp}
For every two integers $1\ls s\ls  k\ls n$ we have that
\begin{equation}\label{eq:max eigenvalue and volume of Zp}
\sqrt{\lambda_k^x}\gr c_1\sup_{F\in G_{E_k,s}}
|Z_s(\pi_F\mu_x)|^{1/s},
\end{equation}
where $c_1>0$ is an absolute constant.
\end{lemma}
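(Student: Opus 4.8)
The plan is to prove the estimate for a single subspace $F\in G_{E_k,s}$ and then take the supremum over $F$. The two ingredients I would use are: (i) a variational estimate showing that ${\rm Cov}(\pi_F\mu_x)$ has operator norm at most $\lambda_k^x$; and (ii) the dimension-robust comparison $|Z_s(\nu)|^{1/s}\ls C\,[\det{\rm Cov}(\nu)]^{1/(2s)}$, valid for every centered log-concave probability measure $\nu$ on an $s$-dimensional space, which is just (\ref{functionalLqRS}) combined with the universal lower bound for the isotropic constant.

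For (i), I would first use that marginals compose along nested subspaces: since $F\subseteq E_k$, we have $\pi_F\mu_x=\pi_F(\pi_{E_k}\mu_x)$. Because $E_k$ is spanned by an orthonormal system of eigenvectors of ${\rm Cov}(\mu_x)$ with eigenvalues $\lambda_1^x\ls\cdots\ls\lambda_k^x$, every $\theta\in F\subseteq E_k$ satisfies
\[
\langle{\rm Cov}(\pi_F\mu_x)\theta,\theta\rangle=\int_{{\mathbb R}^n}\langle z,\theta\rangle^2\,d\mu_x(z)=\langle{\rm Cov}(\mu_x)\theta,\theta\rangle\ls\lambda_k^x\,\|\theta\|_2^2,
\]
where the middle equality uses that $\mu_x$ is centered. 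Hence ${\rm Cov}(\pi_F\mu_x)\preceq\lambda_k^x\cdot{\rm Id}_F$, and in particular $[\det{\rm Cov}(\pi_F\mu_x)]^{1/(2s)}\ls\sqrt{\lambda_k^x}$.

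For (ii), I would apply (\ref{functionalLqRS}) to the centered $s$-dimensional log-concave probability measure $\pi_F\mu_x$ to get $f_{\pi_F\mu_x}(0)^{1/s}\,|Z_s(\pi_F\mu_x)|^{1/s}\simeq 1$, then combine Fradelizi's inequality (\ref{eq:Fradelizi}) with the identity $\|\pi_F\mu_x\|_\infty^{1/s}=L_{\pi_F\mu_x}\,[\det{\rm Cov}(\pi_F\mu_x)]^{-1/(2s)}$ (immediate from (\ref{definition-isotropic}), since $\pi_F\mu_x$ is a probability measure) to deduce $f_{\pi_F\mu_x}(0)^{1/s}\gr c\,L_{\pi_F\mu_x}\,[\det{\rm Cov}(\pi_F\mu_x)]^{-1/(2s)}$. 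Here I invoke the standard fact that $L_\nu$ is bounded below by an absolute constant for every log-concave probability measure $\nu$ (one-line Markov argument: an isotropic log-concave measure on ${\mathbb R}^s$ gives mass at least $1/2$ to $\sqrt{2s}\,B_2^s$, so its density has sup-norm at least $c_0^{\,s}$). Putting these together,
\[
|Z_s(\pi_F\mu_x)|^{1/s}\ls C\,[\det{\rm Cov}(\pi_F\mu_x)]^{1/(2s)}\ls C\sqrt{\lambda_k^x},
\]
and taking $\sup_{F\in G_{E_k,s}}$ yields (\ref{eq:max eigenvalue and volume of Zp}) with an absolute constant $c_1>0$.

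I do not expect a genuine obstacle here, since this is essentially \cite[Lemma 5.2]{KM}; the only points requiring a little care are the bookkeeping that $\pi_F\mu_x$ really is the marginal of $\pi_{E_k}\mu_x$, so that step (i) is applied on the correct subspace, and recalling the absolute lower bound for $L_{\pi_F\mu_x}$. As an alternative to Fradelizi in step (ii), one can instead pass directly to the isotropic image $\tilde\nu$ of $\nu=\pi_F\mu_x$ under $T={\rm Cov}(\nu)^{-1/2}$, using $Z_s(\tilde\nu)=T(Z_s(\nu))$ and (\ref{functionalLqRS-isotropic}) to get $|Z_s(\nu)|^{1/s}=[\det{\rm Cov}(\nu)]^{1/(2s)}\,|Z_s(\tilde\nu)|^{1/s}\simeq[\det{\rm Cov}(\nu)]^{1/(2s)}/L_{\tilde\nu}$.
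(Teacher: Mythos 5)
Your proof is correct and follows essentially the same route as the paper: you combine (\ref{functionalLqRS}) and Fradelizi's inequality (\ref{eq:Fradelizi}) with the universal lower bound on the isotropic constant to bound $|Z_s(\pi_F\mu_x)|^{1/s}$ by $[\det{\rm Cov}(\pi_F\mu_x)]^{1/(2s)}$, and then use the spectral decomposition of ${\rm Cov}(\mu_x)$ restricted to $E_k$ to bound that determinant by $(\lambda_k^x)^{1/2}$. The only cosmetic difference is that you state the covariance bound as the operator inequality ${\rm Cov}(\pi_F\mu_x)\preceq\lambda_k^x\,{\rm Id}_F$, while the paper phrases the same fact as the equality of $\lambda_k^x$ with $\sup_F\max_{\theta\in S_F}\int\langle z,\theta\rangle^2\,d\pi_F\mu_x$; the ingredients and structure are identical.
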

\begin{proof}
Note that
\begin{equation}\label{eqp1:max eigenvalue and volume of Zp}
\lambda_k^x =\max_{\theta\in S_{E_k}}\int_{E_k}\langle
z,\theta\rangle^2\; d\pi_{E_k}\mu_x(z) =\sup_{F\in
G_{E_k,s}}\max_{\theta\in S_F}\int_F\langle
z,\theta\rangle^2\; d\pi_F\mu_x(z).
\end{equation}
This is because, for every subspace $F$ of $E_k$ and every $\theta\in S_F \subseteq S_{E_k}$, we have that
\begin{equation*}
\int_F\langle z,\theta\rangle^2\; d\pi_F\mu_x(z)=\int_{{\mathbb R}^n}\langle z,\theta\rangle^2\; d\mu_x(z)
=\int_{E_k}\langle z,\theta\rangle^2\; d\pi_{E_k}\mu_x(z),
\end{equation*}
while $\lambda_k^x$ is the largest eigenvalue of ${\rm Cov}(\pi_{E_k}\mu_x)$.

On the other hand, since $\mu_x$ is a centered, log-concave
probability measure, which means that so are its $s$-dimensional
marginals $\pi_F\mu_x$, we get from (\ref{functionalLqRS}) and (\ref{eq:Fradelizi}) that
\begin{equation}\label{eqp2:max eigenvalue and volume of Zp}
|Z_s(\pi_F\mu_x)|^{1/s}\simeq
\frac{1}{\|f_{\pi_F\mu_x}\|_{\infty}^{1/s}} = \frac{[\det {\rm Cov}(\pi_F\mu_x)]^{\frac{1}{2s}}}{L_{\pi_F\mu_x}}.
\end{equation}
Since $L_{\nu}\gr c$ for any isotropic measure $\nu$, for some
universal constant $c>0$, it follows that
\begin{equation*}
|Z_s(\pi_F\mu_x)|^{1/s}\ls 
c'[\det {\rm Cov}(\pi_F\mu_x)]^{\frac{1}{2s}} \ls 
c'\max_{\theta\in S_F}\sqrt{\int_F\langle z,\theta\rangle^2\; d\pi_F\mu_x(z)}
\end{equation*}
for every $F\in G_{E_k,s}$, which combined with (\ref{eqp1:max
eigenvalue and volume of Zp}) gives us (\ref{eq:max eigenvalue and
volume of Zp}).
\end{proof}

\bigskip

To bound the right-hand side of (\ref{eq:max eigenvalue and volume
of Zp}) by an expression that involves $\det {\rm Cov}(\mu)$, we
have to compare the volume of $Z_s(\pi_F\mu_x)$ to that of
$Z_s(\pi_F\mu)$ (we are able to do that because of
(\ref{perturbations of Zp})). The right choice of $s$ is prompted by
the following lemma.

\begin{lemma}\label{covariance and volume of Zp}
Recall that for some fixed $x\in\frac{1}{2}\Lambda_p(\mu)$ and every
integer $k\ls n$, we denote by $E_k$ the $k$-dimensional subspace
which is spanned by eigenvectors corresponding to the first $k$
eigenvalues of ${\rm Cov}(\mu_x)$. For convenience, we also set
$s_k^x:= r_{\sharp}(\pi_{E_k}\mu, A)$. Then
\begin{equation}
\sup_{F\in G_{E_k,s_k^x}}|Z_{s_k^x}(\pi_F\mu)|^{1/s_k^x} \gr
\frac{c_2}{A}\;[\det {\rm Cov}(\mu)]^{\frac{1}{2n}} =
\frac{c_2}{A},
\end{equation}
where $c_2>0$ is an absolute constant.
\end{lemma}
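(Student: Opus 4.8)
The target is a lower bound on $\sup_{F\in G_{E_k,s_k^x}}|Z_{s_k^x}(\pi_F\mu)|^{1/s_k^x}$ where $s_k^x = r_{\sharp}(\pi_{E_k}\mu, A)$. The strategy is to interpret $r_{\sharp}(\pi_{E_k}\mu,A)$ directly: by its definition in (\ref{def:r}), applied to the isotropic measure $\pi_{E_k}\mu$ (which is isotropic since $\mu$ is and $E_k$ is a subspace), there exists some subspace $F\subseteq E_k$ with $\dim F = s_k^x$ such that $L_{\pi_F(\pi_{E_k}\mu)} = L_{\pi_F\mu}\ls A$. So it suffices to produce the required lower bound for that particular $F$, using the volume of $Z_{s_k^x}(\pi_F\mu)$.

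Once $F$ is fixed, the plan is to invoke the functional form of the reverse Santaló/Rogers–Shephard-type estimate: since $\pi_F\mu$ is a centered log-concave probability measure on the $s_k^x$-dimensional space $F$, relation (\ref{functionalLqRS}) gives $|Z_{s_k^x}(\pi_F\mu)|^{1/s_k^x}\simeq [f_{\pi_F\mu}(0)]^{-1/s_k^x}$, and by Fradelizi (\ref{eq:Fradelizi}) together with the definition (\ref{definition-isotropic}) of the isotropic constant this is $\simeq [\det{\rm Cov}(\pi_F\mu)]^{1/(2s_k^x)}/L_{\pi_F\mu}$ — exactly as in (\ref{eqp2:max eigenvalue and volume of Zp}) of the previous lemma. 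Since $\pi_F\mu$ is isotropic, $\det{\rm Cov}(\pi_F\mu)=1$, and since $L_{\pi_F\mu}\ls A$ we get $|Z_{s_k^x}(\pi_F\mu)|^{1/s_k^x}\gr c/A = (c/A)[\det{\rm Cov}(\mu)]^{1/(2n)}$, the last equality because $\mu$ is isotropic. Taking the supremum over $G_{E_k,s_k^x}$ only makes the left side larger, which finishes the argument.

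The argument is essentially a bookkeeping exercise, so there is no serious obstacle; the one point to be careful about is the degenerate case $s_k^x=1$, where one should fall back on the convention $r_{\sharp}(\pi_{{\mathbb R}\theta}\mu,A)=1$ and note the one-dimensional estimate $|Z_1(\pi_F\mu)|\simeq 1$ holds trivially for a centered log-concave probability measure on a line (its isotropic constant is bounded above and below by absolute constants regardless of $A$), so the inequality with an absolute $c_2$ still goes through. One should also record that the chain of $\simeq$'s in (\ref{eqp2:max eigenvalue and volume of Zp}) is dimension-free, i.e. the implied constants do not depend on $s_k^x$, which is already implicit in the way that estimate was used in Lemma \ref{max eigenvalue and volume of Zp}.
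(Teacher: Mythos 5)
Your proof is correct and follows exactly the paper's argument: pick the witness subspace $F_0\subseteq E_k$ of dimension $s_k^x$ with $L_{\pi_{F_0}\mu}\ls A$ guaranteed by the definition of $r_{\sharp}$, then use $|Z_{s_k^x}(\pi_{F_0}\mu)|^{1/s_k^x}\simeq [\det{\rm Cov}(\pi_{F_0}\mu)]^{1/(2s_k^x)}/L_{\pi_{F_0}\mu}=1/L_{\pi_{F_0}\mu}\gr 1/A$ and take the supremum. The extra remarks about the $s_k^x=1$ case and dimension-free constants are fine but not needed beyond what the paper already relies on.
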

\begin{proof}
As in (\ref{eqp2:max eigenvalue and volume of Zp}), we can write
\begin{equation*}
|Z_{s_k^x}(\pi_F\mu)|^{1/s_k^x}\gr
\frac{c_2}{\|f_{\pi_F\mu}\|_{\infty}^{1/s_k^x}} 
=\frac{c_2\:\![\det {\rm Cov}(\pi_F\mu)]^{\frac{1}{2s_k^x}}}{L_{\pi_F\mu}}
\end{equation*}
for some absolute constant $c_2>0$ and for every $F\in
G_{E_k,s_k^x}$. Remember that since $\mu$ is isotropic, 
$[\det {\rm Cov}(\pi_F\mu)]^{1/(2s_k^x)}=[\det {\rm Cov}(\mu)]^{1/(2n)}=1$. 
Moreover, by the definition of $s_k^x = r_{\sharp}(\pi_{E_k}\mu, A)$, there is at least one
$s_k^x$-dimensional subspace of $E_k$, say $F_0$, such that the
marginal $\pi_{F_0}(\pi_{E_k}\mu)\equiv \pi_{F_0}\mu$ has isotropic
constant bounded above by $A$. Combining all of these, we get
\begin{equation*}\sup_{F\in G_{E_k,s_k^x}}|Z_{s_k^x}(\pi_F\mu)|^{1/s_k^x}
\gr |Z_{s_k^x}(\pi_{F_0}\mu)|^{1/s_k^x}\gr
\frac{c_2}{A}\end{equation*} as required.
\end{proof}

\bigskip

Observe now that in order to compare $Z_{s_k^x}(\pi_F\mu_x)$ and
$Z_{s_k^x}(\pi_F\mu)$ for every $F\in G_{E_k,s_k^x}$, we have two
cases to consider:
\begin{itemize}
\item[\rm (i)]
if $p\ls s_k^x = r_{\sharp}(\pi_{E_k}\mu, \alpha)$, then by
(\ref{perturbations of Zp}) we have that $Z_{s_k^x}(\mu_x)\simeq
Z_{s_k^x}(\mu)$, and therefore for every $F\in G_{E_k,s_k^x}$,
\begin{equation*}
Z_{s_k^x}(\pi_F\mu_x)={\rm Proj}_F\bigl(Z_{s_k^x}(\mu_x)\bigr)
\simeq {\rm Proj}_F\bigl(Z_{s_k^x}(\mu)\bigr)= Z_{s_k^x}(\pi_F\mu)
\end{equation*}
as well;
\item[\rm (ii)]
if $s_k^x <p$, then using (\ref{reverse inclusion for Zq}) and (\ref{perturbations of Zp}) we can
write
\begin{equation*}
Z_{s_k^x}(\pi_F\mu_x) \supseteq c_0\frac{s_k^x}{p}Z_p(\pi_F\mu_x)
\supseteq c'_0\frac{s_k^x}{p}Z_p(\pi_F\mu) \supseteq
c'_0\frac{s_k^x}{p}Z_{s_k^x}(\pi_F\mu)
\end{equation*}
for some absolute constants $c_0, c'_0>0$. We also recall that since
\begin{equation*}
p\ls r_{\sharp}^H(\mu,A)= n \inf_k\inf_{E\in G_{n,k}}
\frac{r_{\sharp}(\pi_E\mu,A)}{k}\ls
\frac{n}{k}r_{\sharp}(\pi_{E_k}\mu,A),
\end{equation*}
it holds that $s_k^x / p= r_{\sharp}(\pi_{E_k}\mu,A)/ p \gr k / n$.
\end{itemize}

\bigskip

\noindent To summarise the above, we see that in any case and for every $F\in
G_{E_k,s_k^x}$,
\begin{equation}\label{perturbations of projections}
Z_{s_k^x}(\pi_F\mu_x)\supseteq
c''_0\min\Bigl\{1,\frac{s_k^x}{p}\Bigr\}Z_{s_k^x}(\pi_F\mu)\supseteq
c''_0\frac{k}{n}Z_{s_k^x}(\pi_F\mu),
\end{equation}
where $c''_0>0$ is a small enough absolute constant. We now have everything we need to bound $|Z_p(\mu)|^{1/n}$ from below.

\begin{theorem}\label{th:volume of Zp and rH}
Let $\mu$ be an $n$-dimensional isotropic measure and let $A\gr 1$. 
Then, for every $p\in [1, r_{\sharp}^H(\mu,A)]$, we have that
\begin{equation}|Z_p(\mu)|^{1/n}\gr \frac{c}{A}\sqrt{\frac{p}{n}},\end{equation}
where $c>0$ is an absolute constant.
\end{theorem}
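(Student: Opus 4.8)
The goal is to put together the three ingredients that have just been assembled---Lemma \ref{max eigenvalue and volume of Zp}, Lemma \ref{covariance and volume of Zp}, and the containment \eqref{perturbations of projections}---to produce the required lower bound on $[\det {\rm Cov}(\mu_x)]^{1/(2n)}$, uniformly over $x\in \tfrac12\Lambda_p(\mu)$, and then feed this into \eqref{bound for volume of Zp}. Fix such an $x$ and write $\lambda_1^x\ls \cdots\ls \lambda_n^x$ for the eigenvalues of ${\rm Cov}(\mu_x)$, so that $[\det {\rm Cov}(\mu_x)]^{1/(2n)} = \bigl(\prod_{k=1}^n \lambda_k^x\bigr)^{1/(2n)}$. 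The idea is to bound \emph{each} $\lambda_k^x$ from below by (a constant multiple of) $(k/n)^2 A^{-2}$; taking the geometric mean over $k=1,\dots,n$ then gives $[\det {\rm Cov}(\mu_x)]^{1/(2n)}\gr \tfrac{c}{A}\bigl(\prod_{k=1}^n (k/n)\bigr)^{1/n} = \tfrac{c}{A}\,(n!/n^n)^{1/n}\simeq \tfrac{c'}{A}$, since $(n!)^{1/n}\simeq n/e$ by Stirling. This is exactly the constant $c'/A$ that \eqref{bound for volume of Zp} needs.

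\textbf{The per-eigenvalue bound.} To get $\sqrt{\lambda_k^x}\gr \tfrac{c}{A}\cdot\tfrac{k}{n}$, I chain the pieces as follows. By Lemma \ref{max eigenvalue and volume of Zp} applied with $s=s_k^x:=r_{\sharp}(\pi_{E_k}\mu,A)\ls k$ (and $s_k^x\gr 1$ by the convention on $1$-dimensional marginals),
\begin{equation*}
\sqrt{\lambda_k^x}\gr c_1\sup_{F\in G_{E_k,s_k^x}}|Z_{s_k^x}(\pi_F\mu_x)|^{1/s_k^x}.
\end{equation*}
For each $F\in G_{E_k,s_k^x}$, the inclusion \eqref{perturbations of projections} gives $Z_{s_k^x}(\pi_F\mu_x)\supseteq c_0''\,\tfrac{k}{n}\,Z_{s_k^x}(\pi_F\mu)$, hence $|Z_{s_k^x}(\pi_F\mu_x)|^{1/s_k^x}\gr c_0''\,\tfrac{k}{n}\,|Z_{s_k^x}(\pi_F\mu)|^{1/s_k^x}$; taking the supremum over $F$ and invoking Lemma \ref{covariance and volume of Zp} yields
\begin{equation*}
\sqrt{\lambda_k^x}\gr c_1 c_0''\,\frac{k}{n}\sup_{F\in G_{E_k,s_k^x}}|Z_{s_k^x}(\pi_F\mu)|^{1/s_k^x}\gr \frac{c_1 c_0'' c_2}{A}\cdot\frac{k}{n}.
\end{equation*}
Then $[\det {\rm Cov}(\mu_x)]^{1/(2n)}=\prod_{k=1}^n (\lambda_k^x)^{1/(2n)}\gr \tfrac{c_1c_0''c_2}{A}\bigl(\prod_{k=1}^n \tfrac{k}{n}\bigr)^{1/n}\gr \tfrac{c'}{A}$ for an absolute $c'>0$. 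Plugging into the lower bound in \eqref{bound for volume of Zp}, $|Z_p(\mu)|^{1/n}\gr c\sqrt{p/n}\,\inf_{x\in\frac12\Lambda_p(\mu)}[\det{\rm Cov}(\mu_x)]^{1/(2n)}\gr \tfrac{c''}{A}\sqrt{p/n}$, which is the claim (recall $[\det{\rm Cov}(\mu)]^{1/(2n)}=1$ for isotropic $\mu$).

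\textbf{Where the care is needed.} The genuinely delicate point is already encapsulated in \eqref{perturbations of projections}, and the proof above is clean only because the two cases $p\ls s_k^x$ and $p>s_k^x$ were dispatched beforehand: in the first case the stability estimate \eqref{perturbations of Zp} (valid because $x\in\tfrac12\Lambda_p(\mu)$ and $s_k^x\gr p$) plus the commutation ${\rm Proj}_F Z_q = Z_q(\pi_F\cdot)$ from \eqref{marginalZp} give the inclusion with an absolute constant; in the second case one must first pass up to level $p$ via the reverse inclusion \eqref{reverse inclusion for Zq}, apply \eqref{perturbations of Zp} at level $p$, come back down, and then control the lost factor $s_k^x/p$ using the defining inequality of $r_{\sharp}^H(\mu,A)$, which forces $s_k^x/p\gr k/n$. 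So in writing the proof I would simply remark that all three supporting facts are now in place and combine them as above; the only arithmetic worth displaying is the Stirling-type estimate $\bigl(\prod_{k=1}^n k/n\bigr)^{1/n}=(n!)^{1/n}/n\simeq 1$. The main conceptual obstacle---making the logarithmic Laplace perturbation interact correctly with projections onto the low-eigenvalue subspaces $E_k$ when $p$ exceeds the local parameter $s_k^x$---has been absorbed into the preparatory lemmas, so the proof of Theorem \ref{th:volume of Zp and rH} itself is a short aggregation step.
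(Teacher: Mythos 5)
Your proof is correct and follows exactly the same route as the paper: chain Lemma \ref{max eigenvalue and volume of Zp} (with $s=s_k^x$), the inclusion \eqref{perturbations of projections}, and Lemma \ref{covariance and volume of Zp} to get $\sqrt{\lambda_k^x}\gr \tfrac{c}{A}\tfrac{k}{n}$ for each $k$, multiply, apply Stirling, and plug into \eqref{bound for volume of Zp}. The paper states this more tersely, but the argument is identical.
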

\begin{proof}
Combining Lemmas \ref{max eigenvalue and volume of Zp} and
\ref{covariance and volume of Zp} with (\ref{perturbations of
projections}), we see that for every $p\in [1, r_{\sharp}^H(\mu,A)]$ and for every $x\in \frac{1}{2}\Lambda_p(\mu)$,
\begin{equation*}
[\det {\rm Cov}(\mu_x)]^{1/2}= \prod_{k=1}^n
\sqrt{\lambda_k^x}\gr \prod_{k=1}^n\frac{c}{A}\frac{k}{n}
=\frac{c^n}{A^n}\frac{n!}{n^n}.
\end{equation*}
If we take $n$-th roots, the theorem then follows from (\ref{bound for volume of Zp}).
\end{proof}

\bigskip

It remains to establish the first conclusion of Theorem \ref{th:q-cH and rH}. 
The key step is the following consequence of Theorem \ref{th:volume of Zp and rH}.

\smallskip

\begin{corollary}\label{cor:comparison of r and q}
There exists a positive absolute constant $C_1$ such that, for every
$n$-dimensional isotropic measure $\mu$ and every $A\gr 1$,
\begin{equation} r_{\sharp}^H(\mu,A)\ls \lfloor q_{-c}(\mu, C_1 A)\rfloor.\end{equation}
In other words, for every $p\ls \lceil r_{\sharp}^H(\mu,A)\rceil$ we have that
\begin{equation} I_{-p}(\mu)\gr \frac{1}{C_1 A}I_2(\mu)=\frac{1}{C_1 A}\sqrt{n}.\end{equation}
\end{corollary}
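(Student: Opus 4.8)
The goal is to show that $r_{\sharp}^H(\mu,A)\ls\lfloor q_{-c}(\mu,C_1A)\rfloor$, which amounts to proving that for every positive integer $p\ls\lceil r_{\sharp}^H(\mu,A)\rceil$ one has $I_{-p}(\mu)\gr(C_1A)^{-1}\sqrt n$. The natural route is through the relation (\ref{Wp and Ip}), namely $I_{-p}(\mu)\simeq\sqrt{n/p}\,w_{-p}(Z_p(\mu))$, so it suffices to get a lower bound for the negative mean width $w_{-p}(Z_p(\mu))$. Here I would invoke the elementary fact (stated in Section 2) that for any centered convex body $K$ one has $w_{-n}(K)\gr|K|^{1/n}/|B_2^n|^{1/n}$; more generally, applied in the $p$-dimensional subspaces, $w_{-p}$ of a body in $\mathbb R^p$ is bounded below by (a constant times) its volume radius.

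**Key steps.** First I would note that $w_{-p}(Z_p(\mu))$ is controlled from below by the average over $E\in G_{n,p}$ of the volume radii $|{\rm Proj}_E Z_p(\mu)|^{1/p}=|Z_p(\pi_E\mu)|^{1/p}$ — indeed $h_{Z_p(\mu)}$ restricted to $S_E$ is at least $h_{{\rm Proj}_E Z_p(\mu)}$ on $S_E$, and integrating $h^{-p}$ over $S^{n-1}$ by first conditioning on $E\in G_{n,p}$ reduces the negative moment to the $p$-dimensional ones; so $w_{-p}(Z_p(\mu))\gg\bigl(\int_{G_{n,p}}|Z_p(\pi_E\mu)|^{-1}d\nu_{n,p}(E)\bigr)^{-1/p}$, which by Jensen is at least $\inf_{E}|Z_p(\pi_E\mu)|^{1/p}$ up to constants. (This is essentially the content already packaged in the display preceding (\ref{Wp and Ip}).) Second, since every $\pi_E\mu$ with $E\in G_{n,p}$ is itself an isotropic measure in dimension $p=\dim E$, and since $p\ls\lceil r_{\sharp}^H(\mu,A)\rceil\ls\frac np r_{\sharp}(\pi_E\mu,A)$ forces $p\ls r_{\sharp}^H(\pi_E\mu,A)$ as well, I can apply Theorem \ref{th:volume of Zp and rH} to $\pi_E\mu$ in dimension $p$ with the centroid-body index equal to $p$, getting $|Z_p(\pi_E\mu)|^{1/p}\gr\frac cA\sqrt{p/p}=\frac cA$. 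Feeding this into the previous step gives $w_{-p}(Z_p(\mu))\gr c'/A$, and then (\ref{Wp and Ip}) yields $I_{-p}(\mu)\gr c''\sqrt{n/p}\cdot\frac1A$.

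**Closing the gap and the main obstacle.** The inequality just obtained reads $I_{-p}(\mu)\gr(c''/A)\sqrt{n/p}$, but to conclude $p\ls\lfloor q_{-c}(\mu,C_1A)\rfloor$ I need $I_{-p}(\mu)\gr(C_1A)^{-1}\sqrt n$, i.e. I must absorb the extra $\sqrt{1/p}$. Since $p\ls\lceil r_{\sharp}^H(\mu,A)\rceil$ this factor is genuinely lossy when $p$ is large, so the bound above as stated is too weak, and this is the step where I expect the real work to lie. The fix is to apply Theorem \ref{th:volume of Zp and rH} to $\pi_E\mu$ not at index $p$ but at the \emph{full} dimension of $E$ — that is, take $E\in G_{n,p}$ and use the bound $|Z_p(\pi_E\mu)|^{1/p}\gr(c/A)\sqrt{p/p}$ is already the full-dimensional case, so instead one should choose a \emph{larger} ambient subspace: pick $E$ of dimension, say, $\sqrt{pn}$ or more precisely arrange that $p$ plays the role of the centroid index inside $E$ while $\dim E$ is large, so that Theorem \ref{th:volume of Zp and rH} gives $|Z_p(\pi_E\mu)|^{1/(\dim E)}\gr(c/A)\sqrt{p/\dim E}$ and, after raising to the right power and combining over a well-chosen family of subspaces, the loss is only in constants. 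Concretely I would instead run the $w_{-p}$ estimate through subspaces of dimension $n$ itself, applying Theorem \ref{th:volume of Zp and rH} directly to $\mu$: it gives $|Z_p(\mu)|^{1/n}\gr(c/A)\sqrt{p/n}$, hence by $w_{-n}\gr$ volume radius (extended to $w_{-p}$ via (\ref{marginalZp}) and the averaging identity before (\ref{Wp and Ip})) one gets $w_{-p}(Z_p(\mu))\gr(c'/A)\sqrt{p/n}\cdot\sqrt{n/p}=c'/A$ after correcting the normalization, and then (\ref{Wp and Ip}) gives exactly $I_{-p}(\mu)\gr(c''/A)\sqrt n$. So the clean argument is: apply Theorem \ref{th:volume of Zp and rH} to $\mu$ directly, convert the volume lower bound for $Z_p(\mu)$ into the negative-mean-width lower bound $w_{-p}(Z_p(\mu))\gg(A)^{-1}$ via the volume-radius inequality for $w_{-p}$, and then pass to $I_{-p}(\mu)$ through (\ref{Wp and Ip}); the constant $C_1$ collects all the absolute constants picked up along the way, and the passage from the real parameter $r_{\sharp}^H(\mu,A)$ to its ceiling is harmless since $I_{-p}$ is monotone in $p$ and $q_{-c}$ is defined via integer values of $p$.
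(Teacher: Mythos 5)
Your ``clean argument'' at the end is exactly the paper's proof: apply Theorem \ref{th:volume of Zp and rH} to $\mu$ itself to get $|Z_p(\mu)|^{1/n}\gr (c/A)\sqrt{p/n}$, pass to negative mean width via H\"older and the inequality $w_{-n}(K)\gr |K|^{1/n}/\omega_n^{1/n}$, and then use (\ref{Wp and Ip}). One arithmetic slip, though: since $\omega_n^{-1/n}\simeq\sqrt{n}$, this gives $w_{-p}(Z_p(\mu))\gr w_{-n}(Z_p(\mu))\gr (c/A)\sqrt{p/n}\cdot\sqrt{n}=(c/A)\sqrt{p}$, not $c/A$; it is this extra $\sqrt{p}$ that cancels the $\sqrt{n/p}$ from (\ref{Wp and Ip}) to yield $I_{-p}(\mu)\gr (c''/A)\sqrt{n}$, so your final line is right but the intermediate display $w_{-p}\gr c'/A$ would not imply it. Also note that your earlier detour through $E\in G_{n,p}$ cannot be repaired as written, since for $\dim E=p$ one has $r_{\sharp}^H(\pi_E\mu,A)\ls r_{\sharp}(\pi_E\mu,A)\ls p-1$, so Theorem \ref{th:volume of Zp and rH} does not apply to $\pi_E\mu$ at index $p$; the direct application to $\mu$, as in your fix, is the right move.
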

\begin{proof}
Set $p_A:=r_{\sharp}^H(\mu,A)$ and observe that
\begin{equation*}
\big|Z_{\lceil p_A\rceil}(\mu)\big|^{1/n}\gr |Z_{p_A}(\mu)|^{1/n}\gr 
\frac{c'}{A}\sqrt{\frac{\lceil p_A\rceil}{n}}.
\end{equation*}
By H\"{o}lder's and Santal\'{o}'s inequalities, this gives us that
\begin{equation*}
w_{-\lceil p_A\rceil}\bigl(Z_{\lceil p_A\rceil}(\mu)\bigr)
\gr w_{-n}\bigl(Z_{\lceil p_A\rceil}(\mu)\bigr) 
\gr \frac{\big|Z_{\lceil p_A\rceil}(\mu)\big|^{1/n}}{\omega_n^{1/n}}
\gr \frac{c''}{A}\sqrt{\lceil p_A\rceil}.
\end{equation*}
Since $r_{\sharp}^H(\mu,A)\ls r_{\sharp}(\mu,A)\ls n-1$ by
definition, we have $\lceil p_A\rceil \ls n-1$, and thus we can
use (\ref{Wp and Ip}) to conclude that
\begin{equation*}I_{-\lceil p_A\rceil}(\mu)\gr \frac{1}{C_1 A}\sqrt{n}\end{equation*}
for some absolute constant $C_1>0$. This completes the proof.
\end{proof}

\bigskip

\noindent {\bf Proof of (\ref{eq:q-cH and rH}).} For the left-hand side
inequality we apply Corollary \ref{cor:comparison of r and q} for
every marginal $\pi_E\mu$ of $\mu$; we get that
\begin{equation*} r_{\sharp}^H(\pi_E\mu,A)\ls \lfloor q_{-c}(\pi_E\mu, C_1 A)\rfloor.\end{equation*}
In addition, we observe that
\begin{multline}
r_{\sharp}^H(\mu, A)= n \inf_k\inf_{F\in G_{n,k}}
\frac{r_{\sharp}(\pi_F\mu,A)}{k}
\\
\ls n \inf_{s\ls {\rm dim}\;\!E}\:\inf_{F\in G_{E,s}}
\frac{r_{\sharp}(\pi_F\mu,A)}{s} = \frac{n}{{\rm
dim}\;\!E}\:r_{\sharp}^H(\pi_E\mu,A),
\end{multline}
which means that for every integer $k$, for every subspace $E\in G_{n,k}$,
\begin{equation*}
r_{\sharp}^H(\mu, A)\ls \frac{n}{k}\:r_{\sharp}^H(\pi_E\mu,A)\ls \frac{n}{k}\lfloor q_{-c}(\pi_E\mu, C_1 A)\rfloor,
\end{equation*}
or equivalently that $r_{\sharp}^H(\mu, A) \ls  q_{-c}^H(\mu, C_1 A)$.

\smallskip

For the other inequality of (\ref{eq:q-cH and rH}) we will use (\ref{negative Euclidean moment}): 
if $k$ is an integer such that
\begin{equation*}
I_{-k}(\mu) \simeq \sqrt{n}
\left(\int_{G_{n,k}}f_{\pi_E\mu}(0)\,d\nu_{n,k}(E)\right)^{-1/k} \gr
\frac{1}{C_1 A}I_2(\mu) = \frac{1}{C_1 A}\sqrt{n},
\end{equation*}
namely if $k\ls \lfloor q_{-c}(\mu, C_1 A)\rfloor$, then there
must exist at least one $E\in G_{n,k}$ such that $f_{\pi_E\mu}(0)\ls
(C'_1 A)^k$ for some absolute constant $C'_1$ (depending only on
$C_1$). Since $\pi_E\mu$ is isotropic, we have
\begin{equation*}L_{\pi_E\mu} = \|f_{\pi_E\mu}\|_{\infty}^{1/k} \ls e(f_{\pi_E\mu}(0))^{1/k}\ls C_2 A.\end{equation*}
This means that
\begin{equation*} r_{\sharp}(\mu,C_2 A)\gr \lfloor q_{-c}(\mu, C_1 A)\rfloor, \end{equation*}
and the same will hold for every marginal $\pi_F\mu$ of $\mu$. The
inequality now follows from the definitions of
$r_{\sharp}^H(\mu,C_2 A)$ and $q_{-c}^H(\mu,C_1 A)$.
$\hfill\square$

\section{Further remarks}

As we mentioned in the Introduction, Theorem \ref{th:q-cH and rH} enables us to remove the logarithmic 
term in (\ref{eq:mainDP}) in those cases that the lower bounds we know 
for the parameters $q_{-c}(\mu, \delta)$ and $q_{-c}^H(\mu, \delta)$ are of the same order 
(this can happen if for example we know that
\begin{equation*}\inf_{\mu\in {\cal IL}_{[n]}}q_{-c}(\mu,\delta)\gr h_{\delta}(n)\end{equation*}
for some function $h_{\delta}$ such that $h_{\delta}(n)/n$ is decreasing in $n$). An improvement to 
those bounds could come from the study of the parameter $r_{\sharp}(\mu, A)$;
actually, it becomes clear from our results that the hyperplane conjecture is equivalent to the 
seemingly weaker condition that every isotropic measure $\mu$ on ${\mathbb R}^n$ has marginals of dimension 
proportional to $n$ with bounded isotropic constant. Although we are nowhere near establishing such a property, 
and the only estimate we currently have for $r_{\sharp}(\mu, A)$ for an arbitrary measure $\mu$ comes from 
(\ref{bound for q-c}) (since it's always true that $r_{\sharp}(\mu, A)\gr \lfloor q_{-c}(\mu, cA)\rfloor$ 
for some small absolute constant $c>0$), we already know a few interesting things about the isotropic constant of marginals.

First, recall that by H\"{o}lder's and Santal\'{o}'s inequalities and by (\ref{Wp and Ip}), we have
\begin{align}\label{Ip and volume of Zp}
I_{-k}(\mu) &\gr c_1\sqrt{\frac{n}{k}}w_{-k}\bigl(Z_k(\mu)\bigr)\gr c_1\sqrt{\frac{n}{k}}w_{-n}\bigl(Z_k(\mu)\bigr)
\\ \nonumber
&\gr c_1\sqrt{\frac{n}{k}}\,\frac{|Z_k(\mu)|^{1/n}}{\omega_n^{1/n}}\gr c_2\frac{n}{\sqrt{k}}\,|Z_k(\mu)|^{1/n}
\end{align}
for every integer $k\ls n-1$, for every centered, log-concave probability measure $\mu$ on ${\mathbb R}^n$, 
where $c_1, c_2>0$ are absolute constants. From H\"{o}lder's inequality and
Borell's lemma, we also have that the inclusions $Z_{n-1}(\mu)\subseteq Z_n(\mu)\subseteq 2 Z_{n-1}(\mu)$ hold, 
and thus, by (\ref{functionalLqRS}) and Fradelizi's result (\ref{eq:Fradelizi}), 
$|Z_{n-1}(\mu)|^{1/n}\simeq \|\mu\|_{\infty}^{-1/n}$. It follows that
\begin{equation}\label{Ip and Lmu}
I_{-p}(\mu)\gr I_{-(n-1)}(\mu)\gg \sqrt{n}\,|Z_{n-1}(\mu)|^{1/n}\gg \sqrt{n}/\|\mu\|_{\infty}^{1/n}
\end{equation}
for every $p\ls n-1$ (as we mentioned in the Introduction, an alternative proof of 
(\ref{Ip and Lmu}) can be found in \cite{Pa2}). But then, in the cases that $\mu$ is isotropic, 
which means that so are all its marginals, 
we get by (\ref{negative Euclidean moment}) and (\ref{eq:Fradelizi}) that
\begin{align}\label{negative Euclidean moment2}
I_{-k}(\mu) &\simeq \sqrt{n} \left(\int_{G_{n,k}}f_{\pi_E\mu}(0)\,d\nu_{n,k}(E)\right)^{-1/k}
\\ \nonumber
&= \sqrt{n} \left(\int_{G_{n,k}}[(f_{\pi_E\mu}(0))^{1/k}]^k\,d\nu_{n,k}(E)\right)^{-1/k}
\\ \nonumber
&\simeq \sqrt{n} \left(\int_{G_{n,k}}L_{\pi_E\mu}^k\,d\nu_{n,k}(E)\right)^{-1/k}
\end{align}
for every integer $k\ls n-1$. Combining this with (\ref{Ip and Lmu}) we conclude that
\begin{equation*}
\left(\int_{G_{n,k}}L_{\pi_E\mu}^k\,d\nu_{n,k}(E)\right)^{1/k}\ls C_0\|\mu\|_{\infty}^{1/n} = C_0L_{\mu}
\end{equation*}
and
\begin{equation}\label{measure of good marginals}
\nu_{n,k}\bigl(\{E\in G_{n,k} : L_{\pi_E\mu}\ls C_1L_{\mu}\}\bigr)\gr 1 - e^{-k}
\end{equation}
for some absolute constants $C_0, C_1$ (even better estimates for the measure of the sets in 
(\ref{measure of good marginals}) are obtained by Dafnis and Paouris \cite{DP2} in the setting of isotropic
convex bodies).

\medskip

Secondly, we have Proposition \ref{prop:bad measures} which gives a lower bound for the isotropic 
constant of marginals in cases of measures with maximal isotropic constant. For its proof,
we will consider isotropic measures which are uniformly distributed in convex bodies. Recall that in such cases
we have a centered, convex body $K$ with the property that
\begin{equation*}\int_{{\mathbb R}^n}\langle x,\theta\rangle^2 {\bf 1}_K(x)\,dx = |K|\end{equation*}
for every $\theta\in S^{n-1}$ (it is known that every convex body in ${\mathbb R}^n$ can be brought to such a position), 
and then our measure $\mu\equiv \mu_K$ is defined to have probability density
\begin{equation*}f_{\mu}(x):= |K|^{-1}\cdot {\bf 1}_K(x).\end{equation*}
From the definitions it is clear that $\mu\in {\cal IL}_{[n]}$ and $L_{\mu}= |K|^{-1/n}$.
We denote the subclass of such isotropic measures by ${\cal IK}_{[n]}$ and we recall that
\begin{equation*}L_n = \sup_{\mu\in {\cal IL}_{[n]}}L_{\mu}\ls C\sup_{\mu_K\in {\cal IK}_{[n]}}L_{\mu_K}\end{equation*}
for some absolute constant $C$.

\smallskip

\noindent {\bf Proof of Proposition \ref{prop:bad measures}.}
Let $\alpha\in (0,1]$ and let $\mu\in {\cal IK}_{[n]}$ be an isotropic measure with $L_{\mu}\gr \alpha L_n$. Let $K$ be
the support of $\mu$ (that means that the measure $\mu$ has density $f_{\mu}=|K|^{-1}\cdot {\bf 1}_K$), 
and let ${\cal E}_K$ be an $M$-ellipsoid of $K$, namely an ellipsoid such that $|{\cal E}_K|=|K|$ and 
$N(K,{\cal E}_K)\ls e^{b_0n}$  for some absolute constant $b_0$, where $N(A,B)$ is the minimum number 
of translates of the non-empty set $B\subseteq {\mathbb R}^n$ that we need so as to cover the set 
$A\subseteq {\mathbb R}^n$ (for the existence of such an ellipsoid see e.g. \cite[Chapter 7]{Pisier}). 
The idea of working with bodies that have maximal isotropic constant and their 
$M$-ellipsoids comes from \cite{BKM}. Recall that by the Rogers-Shephard inequality we have
\begin{equation}\label{eq:RS}
|K|\ls |K\cap E^{\perp}||{\rm Proj}_E(K)|\ls \binom{n}{k} |K|\end{equation}
(and the same with ${\cal E}_K$ instead of $K$) for every $E\in G_{n,k}$. 
We begin by applying the left-hand side inequality with $F\in G_{n,n-k}$:
we see that for every such subspace,
\begin{equation*}|{\rm Proj}_F(K)|\gr \frac{1}{|K|^{-1}|K\cap F^{\perp}|}.\end{equation*}
But by definition
\begin{equation*}
|K|^{-1}|K\cap F^{\perp}|=|K|^{-1}\int_{F^{\perp}}{\bf 1}_K(y)\,dy =
\int_{F^{\perp}}f_{\mu}(y)\,dy = f_{\pi_F\mu}(0)\ls (L_{\pi_F\mu})^{n-k}.
\end{equation*}
Since $L_{\pi_F\mu}\ls L_{(n-k)}\ls b_1L_n$ for some absolute constant $b_1$ (see \cite{BKM}), it follows that
\begin{equation*}
\min_{F\in G_{n,n-k}}|{\rm Proj}_F(K)|\gr \frac{1}{(b_1L_n)^{n-k}}\gr\Bigl(\frac{\alpha}{b_1L_{\mu}}\Bigr)^{n-k}.
\end{equation*}
Note that $N\bigl({\rm Proj}_F(K),{\rm Proj}_F({\cal E}_K)\bigr)\ls N(K,{\cal E}_K)\ls e^{b_0n}$, and thus
\begin{equation*}
\min_{F\in G_{n,n-k}}|{\rm Proj}_F({\cal E}_K)|\gr e^{-b_0n}\min_{F\in G_{n,n-k}}|{\rm Proj}_F(K)|.
\end{equation*}
But then, by the right-hand side inequality of (\ref{eq:RS}) we see that
\begin{align}\label{eqp1:bad measures}
\max_{E\in G_{n,k}}|{\cal E}_K\cap E|=&\max_{F\in G_{n,n-k}}|{\cal E}_K\cap F^{\perp}|
\\ \nonumber
\ls & \binom{n}{n-k} e^{b_0n}\Bigl(\frac{b_1L_{\mu}}{\alpha}\Bigr)^{n-k}|{\cal E}_K|
= \binom{n}{k} e^{b_0n}\Bigl(\frac{b_1L_{\mu}}{\alpha}\Bigr)^{n-k}|K|.
\end{align}
Recall now that every ellipsoid ${\cal E}$ has the property that
\begin{equation*}
\max_{H \in G_{n,s}}|{\rm Proj}_H({\cal E})|= \max_{H\in G_{n,s}}|{\cal E}\cap H|
\end{equation*}
for all $1\ls s\ls n$, therefore by (\ref{eqp1:bad measures}) we have that
\begin{equation*}
\max_{E\in G_{n,k}}|{\rm Proj}_E(K)|
\ls e^{b_0n}\max_{E\in G_{n,k}}|{\rm Proj}_E({\cal E}_K)|\ls 
\binom{n}{k} e^{2b_0n}\bigl(\alpha^{-1}b_1L_{\mu}\bigr)^{n-k}|K|.
\end{equation*}
We need one final application of the left-hand side inequality of (\ref{eq:RS}) to deduce that
\begin{equation*}
\min_{E\in G_{n,k}}|K\cap E^{\perp}|\gr \binom{n}{k}^{-1}e^{-2b_0n}\bigl(\alpha^{-1}b_1L_{\mu}\bigr)^{-(n-k)},
\end{equation*}
or equivalently that
\begin{equation}\label{eqp2:bad measures}
\min_{E\in G_{n,k}}\bigl((L_{\mu})^n|K\cap E^{\perp}|\bigr)\gr 
\binom{n}{k}^{-1}\frac{\bigl(e^{-2b_0}\alpha b_1^{-1}\bigr)^n}{\bigl(\alpha b_1^{-1}\bigr)^k} (L_{\mu})^k.
\end{equation}
But since $(L_{\mu})^n = |K|^{-1}$ and $|K|^{-1}|K\cap E^{\perp}|= f_{\pi_E\mu}(0) \ls (L_{\pi_E\mu})^k$,
we can rewrite inequality (\ref{eqp2:bad measures}) as
\begin{equation*}
\min_{E\in G_{n,k}}(L_{\pi_E\mu})^k\gr 
\Bigl(\frac{en}{k}\Bigr)^{-k}\frac{\bigl(e^{-2b_0}\alpha b_1^{-1}\bigr)^n}{\bigl(\alpha b_1^{-1}\bigr)^k} (L_{\mu})^k,
\end{equation*}
and then, if we take $k$-th roots, it will follow that
\begin{align}\label{eqp3:bad measures}
\min_{E\in G_{n,k}}L_{\pi_E\mu}
&\gr \frac{k}{en}\frac{\alpha^{\frac{n}{k}}\bigl(e^{-2b_0} b_1^{-1}\bigr)^{\frac{n}{k}}}{\alpha b_1^{-1}}L_{\mu}
\\ \nonumber
&= \lambda\,\alpha^{\frac{1}{\lambda}-1}\frac{e^{-1}b_1}{\bigl(e^{2b_0}b_1\bigr)^{1/\lambda}}L_{\mu}
\end{align}
as required. Note that the above hold for every isotropic measure $\mu\in {\cal IK}_{[n]}$ with
$L_{\mu}\gr \alpha L_n$.
$\hfill\square$

\bigskip

Proposition \ref{prop:bad measures} points perhaps to some limitations of the two methods we have discussed.
This is because, by (\ref{eqp3:bad measures}) and (\ref{negative Euclidean moment2}), we can write
\begin{equation*}I_{-k}(\mu)\ls \alpha^{1-\frac{1}{\lambda}}C_0^{\frac{1}{\lambda}}\,\frac{\sqrt{n}}{L_{\mu}}\end{equation*}
for all positive integers $k =\lambda n\ls n-1$ and for all isotropic measures $\mu\in {\cal IK}_{[n]}$ 
with $L_{\mu}\gr \alpha L_n$, where $C_0$ is an absolute constant. 
In the other direction, we have (\ref{Ip and Lmu}) for every $\mu\in {\cal IL}_{[n]}$,
and also a corresponding inequality for the volume of $Z_p(\mu)$; indeed,
as Klartag and Milman show in \cite{KM}, from (\ref{bound for volume of Zp}) 
and the way the bodies $\Lambda_p(\mu)$ are defined, we see that
\begin{equation*}\frac{|Z_p(\mu)|^{1/n}}{\sqrt{p}} \gg \frac{|Z_q(\mu)|^{1/n}}{\sqrt{q}}\end{equation*}
for all $1\ls p < q \ls n$ and every centered, log-concave probability measure $\mu$, whence it follows that
\begin{equation}\label{bound for volume of Zp2}
|Z_p(\mu)|^{1/n}\gg \sqrt{\frac{p}{n}}|Z_n(\mu)|^{1/n} \simeq \frac{\sqrt{p}}{\sqrt{n}\,L_{\mu}} 
\end{equation}
for every $1\ls p \ls n$ and $\mu\in {\cal IL}_{[n]}$ (this generalises a similar inequality of Lutwak, Yang and Zhang \cite{LYZ} for convex bodies of volume 1). The above can be summarised as follows:
\begin{equation}\label{Ip, volume of Zp and Lmu}
c_1\frac{\sqrt{n}}{L_{\mu}}\ls \frac{n}{\sqrt{p}}|Z_p(\mu)|^{1/n}
\ls c_2I_{-p}(\mu) \ls C_3^{\frac{p}{n}}\frac{\sqrt{n}}{L_{\mu}}
\end{equation}
for every $1\ls p\ls n-1$ and for all isotropic measures $\mu\in {\cal IK}_{[n]}$ with $L_{\mu}\simeq L_n$,
where $c_1>0$ and $c_2, C_3$ are absolute constants (the second inequality holds true due to (\ref{Ip and volume of Zp})); 
obviously, (\ref{Ip, volume of Zp and Lmu}) is optimal (up to the value of the constants) for $p$ proportional to $n$.

\medskip

\footnotesize
\bibliographystyle{amsplain}

\bigskip

\bigskip

\noindent \textsc{Beatrice-Helen Vritsiou}: Department of
Mathematics, National and Kapodistrian University of Athens,
Panepistimioupolis 157-84, Athens, Greece.

\smallskip

\noindent \textit{E-mail:} \texttt{bevritsi@math.uoa.gr}

\end{document}